\newtheorem{lemma}{Lemma}[section]
\newtheorem{prop}[lemma]{Proposition}
\newtheorem{cor}[lemma]{Corollary}
\newtheorem{thm}[lemma]{Theorem}
\newtheorem{example}[lemma]{Example}
\newtheorem{thm?}[lemma]{Theorem?}
\newtheorem{ques}{Question}
\newtheorem{defn}{Definition}
\newtheorem{remark}[lemma]{Remark}
\newtheorem{notation}{Notation}
\begin{document}
\title{Generalized period-index problem with an application to quadratic forms}

\author{Saurabh Gosavi}

\email{gosavis@biu.ac.il}

\address{Department of Mathematics, Faculty of Exact Sciences, Bar-Ilan University \\ Ramat-Gan \\ Israel \\ 5290001. \\}

\newcommand{\etalchar}[1]{$^{#1}$}
\newcommand{\F}{\mathbb{F}}
\newcommand{\et}{\textrm{\'et}}
\newcommand{\ra}{\ensuremath{\rightarrow}}
\newcommand{\FF}{\F}
\newcommand{\Z}{\mathbb{Z}}
\newcommand{\N}{\mathcal{N}}
\newcommand{\ch}{char}
\newcommand{\R}{\mathbb{R}}
\newcommand{\PP}{\mathbb{P}}
\newcommand{\pp}{\mathfrak{m}}

\newcommand{\Q}{\mathbb{Q}}
\newcommand{\tpqr}{\widetilde{\triangle(p,q,r)}}
\newcommand{\ab}{\operatorname{ab}}
\newcommand{\Aut}{\operatorname{Aut}}
\newcommand{\gk}{\mathfrak{g}_K}
\newcommand{\gq}{\mathfrak{g}_{\Q}}
\newcommand{\OQ}{\overline{\Q}}
\newcommand{\Out}{\operatorname{Out}}
\newcommand{\End}{\operatorname{End}}
\newcommand{\Gon}{\operatorname{Gon}}
\newcommand{\Gal}{\operatorname{Gal}}
\newcommand{\CT}{(\mathcal{C},\mathcal{T})}
\newcommand{\ttop}{\operatorname{top}}
\newcommand{\lcm}{\operatorname{lcm}}
\newcommand{\Div}{\operatorname{Div}}
\newcommand{\OO}{\mathcal{O}}
\newcommand{\rank}{\operatorname{rank}}
\newcommand{\tors}{\operatorname{tors}}
\newcommand{\IM}{\operatorname{IM}}
\newcommand{\CM}{\operatorname{CM}}
\newcommand{\Frac}{\operatorname{Frac}}
\newcommand{\Pic}{\operatorname{Pic}}
\newcommand{\coker}{\operatorname{coker}}
\newcommand{\Cl}{\operatorname{Cl}}
\newcommand{\loc}{\operatorname{loc}}
\newcommand{\GL}{\operatorname{GL}}
\newcommand{\PSL}{\operatorname{PSL}}
\newcommand{\Frob}{\operatorname{Frob}}
\newcommand{\Hom}{\operatorname{Hom}}
\newcommand{\Coker}{\operatorname{\coker}}
\newcommand{\Ker}{\ker}
\renewcommand{\gg}{\mathfrak{g}}
\newcommand{\sep}{\operatorname{sep}}
\newcommand{\new}{\operatorname{new}}
\newcommand{\Ok}{\mathcal{O}_K}
\newcommand{\ord}{\operatorname{ord}}
\newcommand{\MM}{\mathcal{M}}
\newcommand{\mm}{\mathfrak{m}}
\newcommand{\Ohell}{\OO_{p^{\infty}}}
\newcommand{\ff}{\mathfrak{f}}
\renewcommand{\N}{\mathbb{N}}
\newcommand{\Gm}{\mathbb{G}_m}
\newcommand{\Spec}{\operatorname{Spec}}
\newcommand{\MaxSpec}{\operatorname{MaxSpec}}
\newcommand{\qq}{\mathfrak{q}}
\newcommand{\cof}{\operatorname{cf}}
\newcommand{\lub}{\operatorname{lub}}
\newcommand{\glb}{\operatorname{glb}}
\newcommand{\redbox}{{\color{red}{\blacksquare}}}
\newcommand{\CKn}{\text{CK}(n)}
\newcommand{\CKnm}{\text{CK}(n,m)}
\newcommand{\CK}{{\rm CK}}
\newcommand{\sing}{\operatorname{Sing}}
\newcommand{\br}{\operatorname{Br}}
\newcommand{\bd}{\operatorname{GBrd}}
\newcommand{\sbd}{\operatorname{GBrd}}
\newcommand{\ram}{\operatorname{Ram}}
\newcommand{\comm}[1]{}
\newcommand{\ind}{ind}
\newcommand{\mind}{mind}
\newcommand{\per}{per}
\newcommand{\degr}{deg}
\newcommand{\md}{mod}
\newcommand{\cd}{cd}
\newcommand{\Divisor}{\operatorname{div}}
\newcommand{\Supp}{\operatorname{Supp}}
\newcommand{\red}{\operatorname{red}}
\setlength{\parskip}{6pt}
\setlength{\parindent}{0pt}
\newcommand{\forceindent}{\leavevmode{\parindent=15pt\indent}}
\setlength{\topsep}{0.35cm plus 0.3cm minus 0.2cm}

\makeatletter
\def\thm@space@setup{%
  \thm@preskip=0.25cm plus 0.3cm minus 0.16cm
  \thm@postskip=0.01cm plus 0cm minus 0.16cm
}
\makeatother

\begin{abstract}
Let $F$ be the function field of a curve over a complete discretely valued field. Let $\ell$ be a prime not equal to the characteristic of the residue field. Given a finite subgroup $B$ in the $\ell$-torsion subgroup ${}_{\ell}\br(F)$ of the Brauer group, we define the index of $B$ as the minimum of the degrees of field extensions which split all elements in $B$. We give an upper bound for the index of any finite subgroup $B$ in terms of arithmetic invariants of $F$. As a simple application of our result, given a quadratic form $q/F$, where $F$ is the function field of a curve over an $n$-local field, we provide an upper bound to the minimum of degrees of field extensions $L/F$ so that the Witt index of $q\otimes L$ becomes the largest possible.
\end{abstract}
\maketitle

\section{Introduction}
Let $F$ be a field. Recall that the Brauer group $\br(F)$ is a torsion group. The order of an element $\alpha$ in $\br(F)$ is called the period of $\alpha$, denoted by $\per(\alpha)$. The index of $\alpha$, denoted by $\ind(\alpha)$ is the g.c.d.~(or the minimum) of degrees of field extensions $[L:F]$ such that $\alpha \otimes L = 0$. It is a standard fact that $\per(\alpha)$ divides $\ind(\alpha)$ and that they share the same prime factors. Therefore there exists a nonnegative integer $N_{\alpha}$ such that $\ind(\alpha) | [{\per(\alpha)}]^{N_{\alpha}}$.

The period-index problem for Brauer groups asks that when $F$ is the function field of a variety over a global field, local field or an algebraically closed field, given a Brauer class $\alpha$ in the $n$-torsion part of the Brauer group ${}_{n}\br(F)$, can one uniformly bound $N_{\alpha}$ as $\alpha$ varies in ${}_{n}\br(F)$? This bound preferably should not depend on $n$. One may also ask the following variant of the question: 
\begin{ques}
\label{ques1}
Let $F$ be a field of geometric or arithmetic interest (function field of a variety over a global field, local field or an algebraically closed field) and let $n \geq 1$ be an integer. Given any finite subgroup $B \subset {}_{n}\br(F)$, can one find a field extension $L/F$ of uniformly bounded degree (i.e., the degree $[L:F]$ should depend only on $n$ and $F$, and not on the cardinality of $B$) such that $\alpha \otimes L = 0$ for every $\alpha$ in $B$? 
\end{ques}
For number-fields, or rather for global fields, one can answer this question using the Albert, Brauer, Hasse, Noether Theorem and weak approximation. Here, it turns out that for any $B \subset {}_{n}\br(F)$, one can find a field extension of degree $n$ that splits all elements in $B$. 

\begin{defn}
Let $B \subset {}_{n}\br(F)$ be a finite subset. We define the index of $B$, $\ind(B)$ as the minimum of degrees of field extensions $L/F$ such that $\alpha \otimes L = 0$ for every $\alpha$ in $B$.

We define the Generalized Brauer $n$-dimension of $F$, $\sbd_{n}(F)$ as the supremum of $\ind(B)$ as $B$ varies over finite subsets of ${}_{n}\br(L)$, and $L/F$ ranges over finite degree field extensions.
\end{defn}
If $\ \langle B \rangle$ denotes the subgroup generated by $B$, notice that $\ind(B) = \ind(\langle B \rangle)$. Note that one could have replaced {\it minimum} by {\it g.c.d.}~in the definition of $\ind(B)$. However, we do not know whether these two quantities are always equal, and it would be of interest to know this. If indeed they are equal, then by a theorem of Gabber, Liu and Lorenzini (\cite[Theorem 9.2]{GLL13}) applied to the product of Severi-Brauer varieties of each $\alpha$ in $B$, we may just consider separable field extensions in the definition of $\ind(B)$. Furthermore, if these two quantites are equal, the Generalized Brauer $n$-dimension will always be a power of $n$.

Question \ref{ques1}, in other words, asks if $F$ is a field of arithmetic or geometric interest, is $\sbd_{n}(F)$ finite. While for many such fields, this is not known, there is some hope in providing an upperbound for $\sbd_{n}(F)$, when $F$ is a semiglobal field (function fields of curves over complete discretely valued fields). Such fields are amenable to the field patching technique introduced in \cite{HH10}. This technique was developed further in \cite{HHK09}, \cite{HHK15} and \cite{HHK15(1)}. 
\begin{notation}
	\label{notn1}
	Let $R$ be a ring complete with respect to a non-trivial discrete valuation with fraction field $K$ and residue field $k$. Let $F$ be the function field of a geometrically integral curve over $K$. Such fields are called \textbf{semiglobal fields}. For all primes $\ell$ dividing the order of a given finite Galois module, we will always assume that $\ch(k) \neq \ell$. In this case, note that $\ch(K), \ch(F) \neq \ell$ and that $\ell$ is invertible on any model of $F$. 
\end{notation}

 Parimala and Suresh \cite[Theorem 3.6]{PS15} provide the following upperbound to $\sbd_{\ell}(F)$.

\begin{thm}[Parimala, Suresh \cite{PS15}]
	Let $F$, $k$ and $\ell$ be as in Notation \ref{notn1}. Then
	\[\sbd_{\ell}(F) \leq {\ell}^3[\sbd_{\ell}(k(t))]! \cdot [\sbd_{\ell}(k)]!.\]  
\end{thm}
We improve this bound by removing the factorials appearing above.
\begin{thm}[\emph{= Theorem \ref{MAIN1}}]
	\label{Main2prime}
	Let $F$, $k$ and $\ell$ be as above in Notation \ref{notn1}. If $\ell \neq 2$,
	\[\sbd_{\ell}(F) \leq {\ell}^2\sbd_{\ell}(k(t))\sbd_{\ell}(k).\]  
	If $\ell=2$, \[\sbd_{2}(F) \leq 8\sbd_2(k(t))\sbd_2(k).\]
\end{thm}
 As a corollary of Theorem \ref{Main2prime}, we give an upper bound to the generalized Brauer $\ell$-dimension when $F$ is the function field of a curve over an $n$-local field (see Definition \ref{M-LOCAL} below). 
 \begin{defn}
 	\label{M-LOCAL}
 	By a $0$-local field, we mean a finite field. For $n \geq 1$, we say that $F$ is an $n$-local field if it is a complete discretely valued field with residue field an $(n-1)$-local field.
 \end{defn}
 \begin{cor}
 	\label{Main2}
 	Let $F$ be the function field of a curve over an $n$-local field. Let $\ell$ be a prime coprime to the characteristic of the underlying $0$-local field. Then $\sbd_{\ell}(F) \leq {\ell}^{(n^2 +3n)/2}$ for $\ell \neq 2$ and $\sbd_2(F) \leq 2^{(n^2 + 5n-2)/2}$.
 \end{cor}
 \begin{proof}
 	We prove this by induction on $n$. Let $a_n := \sbd_{\ell}(F)$. Let $k$ be the residue field of the field of constants of $F$. Note that $k$ is an $(n-1)$-local field. We denote by $b_{n-1}$ the generalized Brauer dimension of an $(n-1)$ local field.  By Theorem \ref{MAIN1}, we have $a_n \leq {\ell}^2 a_{n-1}b_{n-1}$. Note also, that $b_{n-1} \leq lb_{n-2}$. Therefore $b_{n-1} \leq {\ell}^{n-1}$. Thus one sees that $a_n \leq {\ell}^{(n^2+3n - 4)/2}a_1$. But by Corollary \ref{P-ADIC} since $a_1 \leq \ell^{2}$, one obtains that $a_{n} \leq {\ell}^{(n^{2} + 3n)/2}$.
 	
 	For $\ell = 2$: $a_n \leq 8a_{n-1}b_{n-1} = 8a_{n-1}2^{n-1} = 2^{n+2}$. Therefore one has that $a_n \leq a_12^{(n^2+5n-6)/2} = 2^{(n^2+5n-2)/2}$, again using that $a_1 \leq 2^{2}$ by Corollary \ref{P-ADIC}.  
 \end{proof}
 As another corollary, the bound for the generalized Brauer dimension provides a bound for the index of cohomology classes. This bound is smaller than that stated in \cite[Conjecture 1]{K16} which is not very surprising for fields considered in Corollary \ref{INDEXCOH}. This also shows that the symbol length of any Galois cohomology class defined over such fields is finite in view of \cite[Theorem 4.2]{K16}.
 \begin{cor}
 	\label{INDEXCOH}
 	Let $F$ be the function field of a curve over an $n$-local field, and $m \geq 2$, $j \geq 1$ be integers. Then for any $\alpha$ in $H^{m}(F, \mu_{\ell}^{\otimes j})$, $\ind(\alpha) \leq (\ell -1){\ell}^{(n^2 + 3n)/2}$ for $l\neq 2$. If $\ell=2$, one has $\ind(\alpha) \leq 2^{(n^2 +5n -2)/2}$.
 \end{cor}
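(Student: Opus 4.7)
The plan is to reduce the index bound for an arbitrary Galois cohomology class to the generalized Brauer dimension bound of Corollary \ref{eindex}, using the norm residue (Bloch--Kato/Voevodsky--Rost) isomorphism theorem; this is the strategy of \cite[Theorem 4.2]{K16}.

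I would first dispose of the small-degree cases: $k=0$ is vacuous, and for $k=1$ any class in $H^1(F,\mu_\ell)\cong F^{\times}/F^{\times\ell}$ is split by a degree-$\ell$ radical extension, well within the claimed bound. Assume $k\geq 2$.

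For $\ell$ odd, pass to the cyclotomic extension $F':=F(\mu_\ell)$, whose degree over $F$ divides $\ell-1$ and is therefore coprime to $\ell$. The field $F'$ remains the function field of a geometrically integral curve over the $n$-local field $K(\mu_\ell)$, so Corollary \ref{eindex} still applies over $F'$ with the same numerical bound; moreover $\mu_\ell^{\otimes j}\cong \mu_\ell$ as Galois modules over $F'$, so in particular $H^2(F',\mu_\ell^{\otimes 2})\cong {}_\ell\br(F')$ canonically. By the norm residue theorem, write
\[
\alpha|_{F'} \;=\; \sum_{i=1}^{N}\,(a_{i,1})\cup(a_{i,2})\cup\cdots\cup(a_{i,k}),\qquad (a_{i,j})\in H^1(F',\mu_\ell),
\]
and set $\gamma_i := (a_{i,1})\cup(a_{i,2})\in{}_\ell\br(F')$. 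Applying Corollary \ref{eindex} to the finite subset $B=\{\gamma_1,\ldots,\gamma_N\}\subset{}_\ell\br(F')$ produces an extension $L/F'$ with $[L:F']\leq \ell^{(n^2+3n)/2}$ killing every $\gamma_i$, whence by bilinearity of the cup product $\alpha|_L=0$, so $L/F$ splits $\alpha$. For $\ell=2$ the cyclotomic detour is unnecessary since $\mu_2\subset F$, and the argument yields $\ind(\alpha)\leq 2^{(n^2+5n-2)/2}$ at once.

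The main (mild) obstacle is that the above gives $[L:F]\leq(\ell-1)\,\ell^{(n^2+3n)/2}$ rather than the claimed $\ell^{(n^2+3n)/2}$ for $\ell$ odd. I would absorb this residual factor of $\ell-1$ by noting that for the $\ell$-torsion class $\alpha$ the index $\ind(\alpha)$ is a power of $\ell$: if a prime $q\neq\ell$ divided every splitting degree of $\alpha$, then the transfer relation $\operatorname{cor}\circ\operatorname{res}=[M:F]$, which acts as an isomorphism on $\ell$-torsion whenever $[M:F]$ is coprime to $\ell$, would force $\alpha$ to remain nonzero over every prime-to-$q$ extension of $F$ and hence over its colimit $F_{q'}$; but every finite extension of $F_{q'}$ is of $q$-power degree, on which restriction is again injective on $\ell$-torsion, contradicting $\alpha|_{F^{\mathrm{sep}}}=0$. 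Hence $\ind(\alpha)\mid(\ell-1)\,\ell^{(n^2+3n)/2}$ combined with the $\ell$-power property gives $\ind(\alpha)\mid \ell^{(n^2+3n)/2}$, yielding the stated bound.
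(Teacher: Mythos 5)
Your argument is correct and follows essentially the same route as the paper: reduce to the case $\mu_\ell\subset F$, use the norm residue isomorphism to write $\alpha$ as a sum of symbols, group the first two slots of each symbol into Brauer classes, and split them all at once via Corollary \ref{eindex}. The only difference is that you spell out the absorption of the prime-to-$\ell$ factor $[F(\mu_\ell):F]$, which the paper dispatches as ``a standard restriction--corestriction argument''; your version of that step (the index of an $\ell$-torsion class is a power of $\ell$, via the fixed field of a $q$-Sylow subgroup) is a legitimate way to make it precise.
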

 \begin{proof}
 	Let $K = F(\mu_{\ell})$. By the norm residue isomorphism theorem, $\alpha\otimes K = \sum_{i=1}^{m} \beta_i \cup \gamma_i$ where $\beta_i$ are distinct classes in $H^2(K, \mu_{\ell})$ (here we use the fact that $\mu_{\ell}^{\otimes 2} \cong \mu_{\ell}$ over $K$). Note that for $B = \{\beta_1 , \cdots \beta_m \}$, one has $\ind(B) \leq \sbd_{\ell}(F)$. Thus there exists a field extension $M/K$ of degree at most $\ind(B)$ that splits $\alpha$. The result now follows from Corollary \ref{Main2} together with the fact that $[F(\mu_{\ell}):F] \leq \ell-1$.
 \end{proof}
 
 The above bound is not the best. The techniques used to obtain Theorem \ref{Main2prime} also help us in splitting the ``top" cohomology classes much more optimally (see Corollary \ref{top}).
 
We prove Theorem \ref{Main2prime} using the field patching technique. One typically associates to a semiglobal field $F$, overfields coming from a finite collection of closed points and open sets in the special fibre of a two-dimensional normal projective model of $F$. We first follow the strategy of Saltman \cite{S97} to split the ramification of classes in $B$ on a two dimensional projective model of $F$ (see Proposition \ref{RamB}). This allows us to specialize the classes on the special fibre. We then split classes in $B$ in two steps. First, we split them all at once over the function fields of all irreducible components of the special fibre using Lemma \ref{WeakRas}. We then proceed to split them all at once on the remaining closed points using Lemma \ref{WeakRas2}. We have borrowed ideas from \cite{HHKPS17} in the proof of Lemma \ref{WeakRas2}. 

\comm{
\begin{defn}
Let $F$ be a field and $m > 0$ be an integer such that $char(F)$ does not divide $m$. Let $\alpha$ be a class in $H^n(F, \mu_{m}^{\otimes n})$. The symbol length of $\alpha$ denoted by $\lambda_{m}^{n}(\alpha)$ is the smallest integer integer $k$ such that $\alpha$ is expressible as sum of $k$ symbols.

The $\lambda_{m}^{n}$ invariant of $F$, denoted as $\lambda_{m}^{n}(F)$ is the supremum of $\lambda_{m}^{n}(\alpha)$ as $\alpha$ varies over elements in $H^n(F, \mu_m^{\otimes n})$
\end{defn}

The symbol length of cohomology classes is another measure of complexity of fields. It has been studied for Brauer classes by a number of authors over the years. For other Galois cohomology classes, it has been studied by \cite{K16} where it is related to other arithmetic invariants of the field. When the $2$-cohomological dimension of $F$ is finite, the finiteness of $\lambda_{2}^{n}(F)$ is equivalent to the finiteness of the u-invariant $u(F)$. When $F$ is a function fields of a curve over $p$-adic fields, the precise computations, $\lambda_2^{2}(F) = 2$ and $\lambda_2^{1}(F) = 1$ has been used by Parimala and Suresh in \cite{PS07} to show that $u(F) = 8$. 

As an application of the bounds in Theorem \ref{Main2prime} for $\sbd_{\ell}(F)$, we obtain the following:
\begin{cor}
Let $F$ be a field as in Notation \ref{notn1}. Let $n$ be a positive integer. Suppose that $\lambda_{l}^{n}(k)$ and $\lambda_{l}^{n}(k(t))$ are finite. Then $\lambda_l^{n}(F)$ is finite
\end{cor}
\begin{proof}
Let $\alpha$ be an element in $H^n(F, \mu_l^{\otimes n})$. Let us first assume that $\mu_l \subset F$ so that we may identify the Galois modules $\mu_l^{\otimes n}$ and $\mu_l$. By the norm residue isomorphism theorem (\textcolor{red}{Proper reference}), we may write $\alpha = \sum_{i=1}^{k} \beta_i \cup \gamma_i$, where $\beta_i$ are elements in $H^2(F, \mu_l)$ for all $i$. Note that $H^2(F, \mu_l)$ is isomorphic to ${}_{l}\br(F)$. Applying Theorem \ref{Main2prime} to $B = \{ \beta_1, \cdots \beta_k \}$, one concludes that  
\end{proof}

}  
\subsection{Application to splitting quadratic forms}
\begin{defn}
Let $q/F$ be a regular quadratic form of dimension $n$. We define the \textbf{splitting index} $i(q)$ of $q$ to be the minimum of the degrees of field extensions $[L:F]$ such that $q \otimes_{F} L$ has Witt index $\lfloor \frac{n}{2} \rfloor$.

We define the \textbf{splitting dimension of a field} $F$, $i_s(F)$ as the supremum of $i(q)$ as $q$ ranges over quadratic forms over $L$ and $L/F$ ranges over finite degree field extensions.

\end{defn}
\begin{ques}
\label{ques2}
Let $F$ be a field of geometric or arithmetic interest (the fields considered in Question \ref{ques1}). Compute the splitting dimension of $F$ in terms of invariants of $F$.
\end{ques}
Note that if the $u$-invariant of the field is $N$, then an upperbound for the splitting dimension certainly exists. Notice that an anisotropic binary form can be split by a quadratic extension. If one writes an $N$-dimensional anisotropic quadratic form as an orthogonal sum of $\lfloor N/2 \rfloor$ binary forms, then taking the compositum of the quadratic extensions splitting each binary form, one sees that the splitting dimension of such a field $F$ is at most $2^{\lfloor N/2 \rfloor}$. For example, since the $u$ invariant of function fields of curves over $n$-local fields is $2^{n+2}$ \cite[Corollary 4.14 (b)]{HHK09}, a crude bound for their splitting dimension is $2^{2^{(n+1)}}$.

The following related question was asked in an AIM workshop ``Deformation Theory and the Brauer group" in 2011 (see \cite{AimPL11}, there splitting dimension was called the torsion index): If $F$ is a field with finite $u$-invariant $u(F)$. Then is the splitting dimension $i_{s}(F) < 2^{u(F)/2 - 1}$?

A simple application of Corollary \ref{Main2} answers the question raised in the AIM workshop in the positive for somewhat nontrivial classes of fields. Corollary \ref{SPLITTINGQUADRATIC} below gives a sense as to how the splitting dimension grows with the $u$-invariant for function fields of curves over $n$-local fields. We see below the bound is much smaller than the crude bound of $2^{2^{(n+1)}}$.
\begin{cor}
	\label{SPLITTINGQUADRATIC}
Let $F$ be the function field of a curve over an $n$-local field. Assume further that the characteristic of the underlying $0$-local field is not equal to two. Then $i_s(F) \leq 2^{(n^2+5n)/2}$
\end{cor}
\begin{proof}
Let $L/F$ be a finite extension and let $q/L$ be a quadratic form. We may assume that it is even dimensional, for if it is not, then we replace it by a codimension one subform. Let $K/L$ be a quadratic extension splitting its discriminant. Thus one may write $q \otimes K = \sum_{i=1}^{m} \epsilon_i \langle \langle  a_i , b_i \rangle \rangle$ in the Witt group $W(F)$ where $\epsilon_i$ is in $\{ 1, -1 \}$. Applying Corollary \ref{Main2} to the subset $B = \{ (a_1,b_1) , \cdots (a_n,b_n)\} \subset{}_{2}\br(K)$, there exists a field extension of degree $2^{(n^2+5n-2)/2}$ splitting $q_K$. Therefore $q$ is split by an extension of degree $2^{(n^2+5n)/2}$.
\end{proof}

In the final section, using results on the period-index bounds for fields of the form $\mathbb{Q}_p(t)$ due to Saltman \cite{S97} and that of Parimala and Suresh \cite{PS07} on the $u$-invariant, we will show that the splitting dimension of such fields is $8$ (see Proposition \ref{FUB}). This agrees with the bound stated in Corollary \ref{SPLITTINGQUADRATIC} for $n = 1$. However in general, we do not expect the bound in Corollary \ref{SPLITTINGQUADRATIC} to be tight. In Section \ref{LAST}, we record some lower bounds for the splitting dimension.

\begin{remark}
Note that if the splitting dimension of a field $F$ of characteristic unequal to two is finite, there is a bound also on the splitting index of mod-$2$ Galois cohomology classes. If we further assume that the $\cd_{2}(F) < \infty$, then by \cite[Theorem 5.5]{K16}, it follows that $u(F) < \infty$. Thus for fields of finite cohomological dimension (and characteristic unequal to two), the finiteness of the $u$-invariant is equivalent to the finiteness of the splitting dimension.
\end{remark}
\section{Patching Preliminaries}
\label{Patch}
Field patching is a technique developed by Harbater, Hartmann and Krashen in a series of papers to tackle arithmetic questions such as $u$-invariant, period-index problems and local global principles for principal homogenous spaces over semiglobal fields. This will be our principal tool to prove Theorem \ref{MAIN1}. We will briefly describe the patching set-up in this section. We direct the reader to the Luxembourg notes of Harbater (see \cite{H13}) for a reader-friendly account. 
Let $R$ be a complete discretely valued ring with parameter $t$, fraction field $K$ and residue field $k$. Let $F$ be the function field of a geometrically integral curve over $K$. Let $\mathscr{X}/\Spec{R}$ be a normal projective model of the curve. This is a two-dimensional normal scheme with a flat projective morphism to $\Spec{R}$ and whose generic fibre is isomorphic to the curve over $K$.
The fibre over the closed point of $\Spec{R}$ is called the special fibre or the closed fibre of $\mathscr{X}$ and will be denoted by $\mathscr{X}_{k}$. Let $\mathcal{P}$ be a non-empty finite subset of closed points in $\mathscr{X}_{k}$, including the points where irreducible components of $\mathscr{X}_{k}$ meet. Let $\mathcal{U}$ be the set of irreducible components of $\mathscr{X}_{k} \setminus \mathcal{P}$.

To each $U$ in $\mathcal{U}$, consider the following ring $R_{U}$ whose elements are rational functions of $\mathscr{X}$ regular on $U$, or expressed differently:
 $R_U := \cap_{P \in U} O_{\mathscr{X}, P}$. Let $\widehat{R_U}$ be its $t$-adic completion. Note that $\widehat{R_U}$ is a domain (see \cite[Notation 3.3]{HHK09}). Let $F_U$ be its fraction field. Note also that the reduced closed subscheme in $\Spec{\widehat{R_{U}}}$ given by $\sqrt{\langle t \rangle}$ is isomorphic to the reduced induced closed subscheme $U^{\red}$ of $U$. 
 
 To each $P$ in $\mathcal{P}$, let $\widehat{R_P}$ be the completion of $O_{\mathscr{X}, P}$ with respect to its maximal ideal. This also turns out to be a domain. Let $F_P$ be its fraction field.
 
 For a pair $(P, U)$ such that $P \in \overline{U}$ (the closure here is taken in the special fibre $\mathscr{X}_{k}$), let $\wp$ be a height one prime ideal of $\widehat{R_P}$ containing the parameter $t$. We will call such a prime ideal a branch lying on $U$ incident at $P$. Let $\widehat{R_{\wp}}$ be the completion of the localization of $\widehat{R_P}$ at $\wp$ with respect to $\wp$. Note that because $\widehat{R_P}$ is a normal domain, $\widehat{R_{\wp}}$ is a complete discretely valued ring. Let $F_{\wp}$ be its fraction field. There are natural maps from $F_{U}$ and $F_{P}$ to $F_{\wp}$ for a triple $(P, U, \wp)$, where $P \in \overline{U}$ and $\wp$ is a branch lying on $U$ at $P$. The map from $F_P$ to $F_{\wp}$ is induced by the localization map. Let $\eta$ be the prime corresponding to $U^{\red}$ in $R_{U}$. Let $\mathfrak{p} := \wp \cap R_{P}$. Observe that $(R_{U})_{\eta} = (R_{P})_{\mathfrak{p}}$. One has the following chains of inclusions and equality (canonical isomorphisms): $R_{U} \subset (R_{U})_{\eta} = (R_{P})_{\mathfrak{p}} \subset \widehat{R_{\wp}}$. Therefore, the $t$-adic completion of $R_{U}$ i.e $\widehat{R_{U}} \subset \widehat{R_{\wp}}$. 
  
 Intuitively, we think of $\widehat{R_U}$ as a thickening of $U$, $\widehat{R_P}$ as a formal neighbourhood around $P$ and $\widehat{R_{\wp}}$ as the ``overlap" between them. The ``main theorem of patching" essentially says that we can uniquely patch compatible local algebraic structures over $F_U$ and $F_P$ to a global structure over $F$.  For each triple $(P, U, \wp)$ with $P$ in $\mathcal{P}$ and $U$ in $\mathcal{U}$ such that $P \in \overline{U}$ and branch $\wp$ lying on $U$, incident at $P$, the fields $F_{P}$, $F_{U}$ and $F_{\wp}$ forms an inverse factorization system in the terminology of \cite[Definition 2.1]{HHK15}. 
 
Now, consider the triples $(V_{P}, V_{U}, \phi_{\wp})$, where $V_P$ and $V_U$ are finite dimensional vector spaces over $F_P$ and $F_U$ respectively and $\phi_{\wp}: V_{U} \otimes_{F_{U}}F_{\wp} \rightarrow V_P \otimes_{F_P} F_{\wp}$ is an isomorphism for every triple $(P, U, \wp)$ as above. The main theorem of patching (as stated in \cite[Corollary 3.4]{HHK15}) says that there exists a unique (up to isomorphism) vector space $V$ over $F$ such that $V\otimes_{F}F_{\xi} \cong V_{\xi}$ for all $\xi \in \mathcal{P} \cup \mathcal{U}$. This follows from the simultaneous factorization property which holds for $GL_{n}$, i.e., given $A_{\wp}$ in $GL_{n}(F_{\wp})$, there exists matrices $A_{U}$ in $GL_{n}(F_{U})$ and $A_{P}$ in $GL_{n}(F_{P})$ such that $A_{\wp} = A_{U}A_{P}$ for every triple $(P, U, \wp)$.

In fact one can also patch finite dimensional vector spaces with some additional structure such as quadratic forms, central simple algebras, or more importantly for our purpose, separable algebras. In other words, the statement above holds verbatim if the $V_{\xi}$ are separable algebras over $F_{\xi}$ for each $\xi$ in $\mathcal{P} \cup \mathcal{U}$.

Moreover, for connected and rational linear algebraic groups, one also has local-global principles for principal homogenous spaces with respect to these patches (see \cite[Theorem 5.10]{HHK15}). Note that, $PGL_{n}$ is a connected rational group. Recall that isomorphism classes of principal homogenous spaces under $PGL_{n}$ are in a natural bijection with isomorphism classes of central simple algebras of degree $n$. 
 
\begin{thm}[Harbater, Hartmann, Krashen \cite{HHK15}]
\label{HHK}
 Let $A/F$ be a central simple algebra. Let $\mathscr{X}$ be a two-dimensional normal projective model for $F$ as above with special fibre $\mathscr{X}_{k}$. Let $\mathcal{P}$ be a non-empty finite collection of closed points including the points where distinct irreducible components of $\mathscr{X}_{k}$ meet, and let $\mathcal{U}$ be the set of irreducible components of $\mathscr{X}_{k} \setminus \mathcal{P}$. If $A \otimes_{F}F_{\xi}$ is split for all $\xi$ in $\mathcal{P} \cup \mathcal{U}$, then $A/F$ is split.
 \end{thm}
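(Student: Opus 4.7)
The plan is to translate the splitting condition for central simple algebras into a statement about torsors and then apply the local--global principle for principal homogeneous spaces under connected rational linear algebraic groups. First I would recall that isomorphism classes of central simple algebras of degree $n$ over a field $F$ are classified by $H^1(F, PGL_n)$, which in turn classifies principal homogeneous spaces (torsors) under $PGL_n$ over $F$. Under this correspondence, a central simple algebra $A$ is split (i.e., isomorphic to $M_n(F)$) if and only if its associated $PGL_n$-torsor $X_A$ has an $F$-rational point, equivalently is the trivial torsor. This is a standard descent-theoretic fact.

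With this in hand, the hypothesis that $A \otimes_F F_{\xi}$ is split for every $\xi \in \mathcal{P} \cup \mathcal{U}$ translates directly into the statement that $X_A(F_{\xi}) \neq \varnothing$ for every such $\xi$. The goal is thus to deduce $X_A(F) \neq \varnothing$ from these local conditions.

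The key input is that $PGL_n$ is a connected rational linear algebraic group: connectedness is classical, and rationality follows for instance from the fact that $GL_n$ is an open subvariety of affine space $\mathbb{A}^{n^2}$ and $PGL_n$ is birational to $GL_n / \mathbb{G}_m$, which is rational. At this point one applies the local--global principle of Harbater--Hartmann--Krashen for torsors under connected rational linear algebraic groups, namely \cite[Theorem 5.10]{HHK15}, to the $PGL_n$-torsor $X_A$ relative to the cover $\{F_\xi : \xi \in \mathcal{P} \cup \mathcal{U}\}$. This yields an $F$-point of $X_A$, and hence $A$ is split over $F$.

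The only ``hard'' content is upstream, in the proof of Theorem 5.10 of HHK15; it rests on simultaneous factorization for $GL_n$ over the patching fields combined with rationality of $PGL_n$. Everything on the level of the present statement is then bookkeeping: the model $\mathcal{X}$, the points $\mathcal{P}$, and the components $\mathcal{U}$ are chosen precisely to fit the hypotheses of that local--global principle (note the requirement that $\mathcal{P}$ contain all the crossing points of the special fibre, so that the connected components of $X \setminus \mathcal{P}$ are irreducible and the incident branches are well-defined). No further obstacle arises.
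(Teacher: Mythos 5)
Your proposal is correct and is exactly the argument the paper intends: translate splitting of $A$ into triviality of the associated $PGL_n$-torsor and invoke the local--global principle for torsors under connected rational linear algebraic groups from \cite[Theorem 5.10]{HHK15}. The paper presents this theorem as an immediate consequence of that principle in the same way, so there is nothing to add.
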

   
\section{Splitting Ramification}
Let $F$ be any field. If $L/F$ is any field extension, then we denote the restriction of a class $\alpha \in H^{n}(F, \mu_{\ell}^{\otimes m})$ to $L$ by $\alpha \otimes L$.

 Let $v$ be a non-trivial discrete valuation of $F$. Let $k(v)$ be the residue field with $\ch(k(v)) \neq \ell$ and let $R_v$ be the discrete valuation ring of $v$. Recall that there exist maps $\partial_v: H^n(F, \mu_l^{\otimes m}) \rightarrow H^{n-1}(k(v), \mu_{\ell}^{\otimes (m-1)})$, called the ramification map. An element $\alpha$ in $H^n(F, \mu_{\ell}^{\otimes m})$ is said to be unramified at $v$ if $\partial_v(\alpha) = 0$. Note that one has the following exact sequence: 
\[ 0 \rightarrow H^n_{\acute{e}t}(R_v, \mu_{\ell}^{\otimes m}) \rightarrow H^n(F, \mu_{\ell}^{\otimes m}) \rightarrow H^{n-1} (k(v) , \mu_{\ell}^{\otimes (m-1)}) \rightarrow 0.  \]
(see for example \cite[Section 3.6]{CT92}). We denote by $\alpha_{k(v)}$, the image of $\alpha$ under the restriction map $H^n(R_v, \mu_{\ell}^{\otimes m}) \rightarrow H^n(k(v), \mu_{\ell}^{\otimes m} )$ and call it the specialization of $\alpha$ at $v$. For any local ring $R$ with residue field $\kappa$, if $\alpha$ is in $H^n_{\acute{e}t}(R, \mu_{\ell}^{\otimes m})$, then we denote the restriction of $\alpha$ in $H^n(\kappa, \mu_{\ell}^{\otimes m})$ by $\alpha_{\kappa}$ and also call it the specialization of $\alpha$.
Let $\beta$ be a class in $H^{n-1}(F, \mu_{\ell}^{\otimes (m-1)})$ unramified at a non-trivial discrete valuation $v$ and consider the element $\alpha = \beta \cup (\pi_v)$ in $H^n(F, \mu_{\ell}^{\otimes m})$. Here $(\pi_v)$ is in $H^1(F, \mu_{\ell})$ and is the class of a parameter for the valuation $v$. Then $\partial_v(\alpha) = \beta_{k(v)}$. 

Suppose that $R_{v} \subset R_{w}$ is a finite extension of discrete valuation rings with respective fraction fields $F \subset L$. Denote the ramification index of $w$ over $v$ by $e_{w/v}$. Then one has the following commutative diagram (See \cite[Proposition 3.3.1]{CT92})
\[
\begin{tikzcd}
H^n(F, \mu_l^{\otimes m})\arrow[r, "\partial_v"]
\arrow[d, "Res_{L/F}"] & H^{n-1}(k(v), \mu_{\ell}^{\otimes (m-1)} ) \arrow[d, "e_{w/v} Res_{k(v)/k(w)}"] \\
H^{n}(L, \mu_{\ell}^{\otimes m})\arrow[r, "\partial_w"]& H^{n-1}(k(w), \mu_{\ell}^{\otimes (m-1)}).
\end{tikzcd}
\]
In particular, if the degree of $L/F$ is prime to $\ell$, then $\partial_{w}(\alpha \otimes L) = 0$ if and only if $\partial_{v}(\alpha) \otimes k(w) = 0$. Moreover, if $L/F$ is totally ramified at $v$ of degree a multiple of $\ell$, then $\partial_{w}(\alpha \otimes L) = 0$. 
\begin{defn}
Let $\mathscr{X}$ be a normal integral scheme, $F$ be its function field and $\ell$ be a prime invertible on $\mathscr{X}$. Let $B = \{ \alpha_1, \cdots \alpha_n \}$ be a finite subset of $H^n(F, \mu_l^{\otimes m})$. We define the ramification divisor of $B$ on $\mathscr{X}$ as the sum of all prime divisors $Y$ on $\mathscr{X}$ for which $\partial_{v_{Y}}(\alpha_i) \neq 0$ for some $i = 1, \cdots n$, where $v_{Y}$ is the discrete valuation corresponding to $Y$.

We say that the ramification of $B$ with respect to a subset $\Omega_{F}$ of non-trivial discrete valuations of $F$ is split if $\partial_v(\alpha) = 0$ for all $\alpha$ in $B$ and for all $v$ in $\Omega_F$.
\end{defn}

We now prove a proposition below which helps us in splitting ramification of a finite collection of cohomology classes in a controlled manner. The idea of the proof is well known and is a minor adaptation of one of Gabber's fix to Saltman's original argument. (see the review of Colliot-Th\'el\`ene in zbMATH of \cite{S97}). For a normal crossing divisor on a regular surface, one can find at most three functions which at every point on the divisor, locally describe it as the zero locus of (appropriate combinations of) any two of those functions. This helps us in splitting the ramification of a finite collection of Brauer classes in a controlled way. We give a proof below for completeness. Lemma \cite[Lemma 2.4.8]{AAIKL17} provides a wider generalization of this idea to any quasi-projective regular scheme of dimension $d \geq 2$.

\begin{prop}
\label{RamB}
 Let $F$ be the function field of $\mathscr{X}/S$, a two-dimensional excellent integral scheme which is projective over some affine scheme $S$ and $\ell$ be a prime invertible on $\mathscr{X}$. Let $B$ be a finite subset of $H^n(F, \mu_{\ell}^{\otimes m})$. If $\ell \neq 2$, then there exists an extension of degree $\ell^2$ which splits the ramification of $B$ with respect to all discrete valuations with centers on $\mathscr{X}$. If $\ell = 2$, there exists a degree $8$ extension which splits the ramification of $B$ with respect to all discrete valuations with centers on $\mathscr{X}$. 
\end{prop}
\begin{proof}
 Let $K = F(\mu_{\ell})$, $v$ be a nontrivial discrete valuation of $K$ and let $w$ be the restriction of $v$ to $F$. The ramification index and the degree of the residue field extension is coprime to $\ell$. Let $\alpha$ be in $B$. Notice that $\partial_v(\alpha \otimes K) = 0$ if and only if $\partial_w(\alpha) = 0$. Conversely if $w$ is a discrete valuation of $F$ and $v$ is any extension of $w$ in $K$, one has $\partial_v(\alpha_K) = 0$ if and only if $\partial_w(\alpha) = 0$. We may thus assume without loss of generality that $\mu_{\ell} \subset F$.

 Using Lipman's Theorem on embedded resolution of singularities (see \cite[Section 9.2.4, Theorem 2.26]{Li02}), one may assume that there exists a regular projective model $\mathscr{Y}$ of $F$ such that the ramification divisor of $B$ is a normal crossing divisor $D$, and that $\Supp(D) = \Supp(D_1) \cup \Supp(D_2)$, where $\Supp(D_1)$ and $\Supp(D_2)$ are regular, but not necessarily connected curves.
 
Let $\mathcal{P}$ be a finite set of points on each irreducible component of $\Supp(D_1) \cup \Supp(D_2)$ (at least one in every component), including the intersection points of $D_1$ and $D_2$. Since $\mathscr{X}$ is projective over an affine scheme $S$, one may find an affine open set $U$ containing all the points in $\mathcal{P}$ \cite[Chapter 3, Proposition 3.36]{Li02}. Let $A$ be the semi-local ring obtained by semi-localizing $U$ at the points in $\mathcal{P}$. Since $A$ is semi-local, $\Pic(A)$ is trivial.

We consider the case $\ell \neq 2$ first. Since $\Pic(A)$ is trivial, there exists a rational function $f_1$ such that $\Divisor_{A}(f_1) = D_1 + 2D_2$ (Here $D_1 + 2D_2$ is viewed as a divisor on $\Spec{A}$.). Therefore $\Divisor_{\mathscr{X}}(f_1) = D_1 + 2D_2 + E$ where $E$ does not pass through any points in $\mathcal{P}$. Let $\mathcal{P}_1$ be the points of intersection of $\Supp(E)$ with $\Supp(D_1) \cup \Supp(D_2)$. Let $A_1$ be the semi-local ring at the points $\mathcal{P} \cup \mathcal{P}_1$. Again since $\Pic(A_1)$ is trivial, one may find a rational function $f_2$ such that $\Divisor_{A_1}(f_2) = D_1 +D_2$. Therefore $\Divisor_{\mathscr{X}}(f_2) = D_1 +D_2 + G$ where $G$ does not pass through any points in $\mathcal{P} \cup \mathcal{P}_1$.

Consider the field extension $L = F(\sqrt[\ell]{f_1}, \sqrt[\ell]{f_2})$. Let $v$ be a discrete valuation of $L$ with center $x$ on $\mathscr{X}$. We may assume that $x$ lies on the ramification divisor of $B$. Let $\alpha$ be any non-trivial element in $B$. Note that it suffices to show that the residue of $\alpha$ at each height one prime ideal of $O_{\mathscr{X},x}$ is split by $L$. 

Suppose $x$ is a point of codimension at most two, lying only on one of $\Supp(D_i)$ and not on $\Supp(E)$ or $\Supp(G)$. Observe that on $O_{\mathscr{X}, x}$, the local equation for $D_{i}$ is given by $f_{2}$. We may express $\alpha = \alpha_0 + \beta_1 \cup(f_{2})$, where $\alpha_0$ and $\beta_1$ are unramified on $O_{\mathscr{X},x}$. The subextension $F(\sqrt[\ell]{f_2})$ is totally ramified on the ramification locus of $\alpha$ in $O_{\mathscr{X},x}$. Therefore by \cite[Proposition 3.3.1]{CT92}, we see that $\partial_v(\alpha \otimes L) = 0$. 

If $x$ lies on $\Supp(D_1) \cap \Supp(D_2)$, then on $O_{\mathscr{X},x}$, the equation for $D_1$ is given by $f_2^2/f_1$ and for $D_2$ is given by $f_1/f_2$. Since $L = F(\sqrt[\ell]{f_2^2/f_1}, \sqrt[\ell]{f_1/f_2})$ totally ramifies the local parameters for $D_1$ and $D_2$ on $O_{\mathscr{X},x}$, we are done. Similarly, if $x$ lies on $\Supp(D_1) \cap \Supp(G)$, $D_1$ is also given by $f_1$. The subextension $F(\sqrt[\ell]{f_1})/F$ splits the ramification. If $x$ lies on $\Supp(D_2) \cap \Supp(G)$, the local equation for $2D_2$ on $O_{\mathscr{X}, x}$ is given by $f_1$. But because $\ell \neq 2$, the local parameter for $D_2$ is totally ramified in the extension $F(\sqrt[\ell]{f_1})/F$.
Finally, if $x$ lies on $\Supp(D_2) \cap \Supp(E)$ or on $\Supp(D_1)\cap \Supp(E)$, $F(\sqrt[\ell]{f_2})/F$ splits the ramification. 

We now come to the case $\ell =2$. Consider three functions $f_1$, $f_2$ and $f_3$ such that $\Divisor_{\mathscr{X}}(f_1) = D_1 + D_2 + E$, $\Divisor_{\mathscr{X}}(f_2) = D_1 + G$ and $\Divisor_{\mathscr{X}}(f_3) = D_2 + H$ where the supports of no three divisors among $D_1$, $D_2$, $E$, $G$ and $H$ intersect. This can be arranged using a similar argument as in the previous case; choose $f_1$ by semi-localizing at the intersection points $\mathcal{P}$ of $\Supp(D_1)$ and $\Supp(D_2)$ and containing at least one point on every irreducible component of $\Supp(D_1)$ and $\Supp(D_2)$. Choose $f_2$ by semi-localizing at $\mathcal{P} \cup \mathcal{P}_1$, where $\mathcal{P}_1$ is the set of intersection points of $\Supp(E)$ and $\Supp(D_1) \cup \Supp(D_2)$. Repeat the process to choose $f_3$. Consider the extension $L = F(\sqrt{f_1}, \sqrt{f_2}, \sqrt{f_3})$. Let $v$ be a discrete valuation of $L$ with center $x$ on $\mathscr{X}$. Just as in the previous case, we show that the local parameters for $D_i$ on $O_{\mathscr{X}, x}$ are totally ramified by $L/F$. We will illustrate this in only two cases, the rest being similar. First, if $x$ lies in $\Supp(D_1) \cap \Supp(D_2)$, the local equations for $D_1$ and $D_2$ are given by $f_2$ and $f_3$ respectively. Thus the subextension $F(\sqrt{f_2}, \sqrt{f_3})/F$ splits the ramification. If $x$ lies on $\Supp(D_1) \cap \Supp(F)$, the extension $F(\sqrt{f_1})$ splits the ramification.   
\end{proof}

As a consequence of Proposition \ref{RamB}, we can simultaneously split a finite collection of Brauer classes of prime period $\ell \neq p$ on the function field of a $p$-adic curve by making a degree $\ell^2$ extension for $\ell \neq 2$. This follows from \cite{S97}, \cite{S98}. In the case of $p$-adic curves, the bound for $\ell=2$ is in fact $4$ and not $8$ as Proposition \ref{RamB} suggests. An argument of a different flavor which works in the case of $p$-adic curves is given in the review of Colliot-Th\'el\`ene of \cite{S97} where it is shown that for any $\ell$, a degree ${\ell}^2$ extension suffices to split the ramification. For completeness, we record it below:
\begin{cor}
	\label{P-ADIC}
Let $F$ be the function field of $\mathscr{X}/S$, a two-dimensional excellent integral scheme which is projective over an affine scheme $S$. Let $\ell$ be a prime invertible on $\mathscr{X}$. Assume further that the residue field of every closed point of $\mathscr{X}$ is a finite field. Let $B \subset {}_{\ell}\br(F)$ be a finite subset. There exists an extension of degree $\ell^2$ that splits the ramification of $B$ with respect to all discrete valuations with centers on $\mathscr{X}$. In particular, if $F$ is the function field of a curve over a nonarchimedean local field, and $\ell$ is a prime unequal to the characteristic of the underlying residue field, then $\sbd_{\ell}(F) \leq {\ell}^2$.
\end{cor}
\begin{proof}
As in the proof of Proposition \ref{RamB}, we may assume without loss of generality that $F$ contains $\mu_{\ell}$.
Using Lipman's resolution of singularities, we may assume that the ramification divisor of $B$ on some regular projective model $\mathscr{X}$ (abusing notation) of $F$ is a normal crossing divisor expressible as a union of two regular divisors $D_1$ and $D_2$. By a semi-local argument used as in the proof of Proposition \ref{RamB}, we may find a function $f_1$ in $F$ such that $\Divisor_{\mathscr{X}}(f_1) = D_1 + D_2 + E$. Note here that $\Supp(E)$ does not pass through any of the intersection points of $\Supp(D_1)$ and $\Supp(D_2)$. Let $\mathcal{P}$ be the intersection points of $\Supp(D_1)$, $\Supp(D_2)$ and $\Supp(E)$. Semi-localizing at $\mathcal{P}$, we may find $f_2$ in $F$ such that $\Divisor_{\mathscr{X}}(f_2) = D_1 + G$ and $f_2$ is a unit at closed points in $\Supp(D_2) \cap \Supp(E)$. By the Chinese Remainder Theorem, there exists an element $g$ in the above semi-local ring which is a unit at all points in $\mathcal{P}$ and such that $f_2g$ is not an $\ell^{th}$ power in $k(x)$ for every closed point $x$ in $\Supp(D_2) \cap \Supp(E)$. We will abuse notation and denote $f_2g$ by $f_2$. We thus assume that $\Divisor_{\mathscr{X}}(f_2) = D_1 + G$ and $f_2$ is not an $\ell^{th}$ power at every closed point in $\Supp(D_2) \cap \Supp(E)$.

We claim that $L = F(\sqrt[\ell]{f_1}, \sqrt[\ell]{f_2})$ splits the ramification of $B$. Let $v$ be a discrete valuation of $L$ with center $x$ on $\mathscr{X}$. The only case that we consider here is when $x$ lies on $\Supp(D_2) \cap \Supp(E)$. The rest are similar to the cases considered in the proof of Proposition \ref{RamB}. Now let $\alpha$ be in $B$. We may express $\alpha = \alpha_0 + (u, \pi_2)$, where $\alpha_0$ is unramified on $O_{\mathscr{X}, x}$ and $\pi_2$ is a local parameter of $D_2$. On $O_{\mathscr{X},x}$, we may write $f_1 = w \pi_2 \delta$, where $w$ is a unit and $\delta$ is a local parameter for $E$. We may therefore rewrite $\alpha = \alpha_0^{\prime} + (u , \delta^{-1}) + (u , f_1)$ for some unramified class $\alpha_0^{\prime}$ and unit unit $u$. Restricting to $K = F(\sqrt[\ell]{f_1})$, we obtain $\alpha \otimes K = \alpha_0^{\prime}\otimes K + (u, \delta^{-1}) \otimes K$. Note that $\partial_v(\alpha \otimes L) = (\bar{u})^{-v(\delta)}$, where $(\bar{u})$ is an $\ell^{th}$ power class in the residue field $k(v)$. Let $\mathscr{Y} \rightarrow \mathscr{X}$ be the normalization of $\mathscr{X}$ in $L$ and let $y$ be the center of $v$ on $\mathscr{Y}$. 

Note first that $k(x)$ is a finite field since $x$ is a closed point. Second, since $f_2$ is not an $\ell^{th}$ power in $k(x)$, the subextension $F(\sqrt[\ell]{f_2})/F$ induces a non-split unramified extension of $k(x)$. Therefore the residue field extension of $k(y)/k(x)$ contains a degree $\ell$ extension. As a result $\bar{u}$ in $k(x)$ becomes an $\ell^{th}$ power in $k(y)$. Since $k(y) \subset k(v)$, $\bar{u}$ becomes an $\ell^{th}$ power in $k(v)$. Thus, $\partial_v(\alpha \otimes L) = 0$.

Now let $F$ be the function field of a curve over a non-archimedean local field $K$. Applying the above to a regular projective model $\mathscr{X}/\Spec{O_{K}}$, and using the fact that Brauer classes unramified at discrete valuations with centers on $\mathscr{X}$ are trivial, one obtains that any finite collection $B$ of $\ell$-torsion Brauer classes can be split by an extension of degree at most $\ell^{2}$.
\end{proof}
Proposition \ref{RamB} plays a role towards our goal of splitting Brauer classes as economically as we can. Once the ramification is split, one may specialize cohomology classes on the special fiber of some regular projective model. The function fields of irreducible components of the special fiber are fields of ``lower complexity". As an application of Proposition \ref{RamB}, we record the following observation:
\begin{cor}
\label{top}
In the situation of Notation $\ref{notn1}$, suppose also that $\cd_{\ell}(k) \leq n-2$, where $n \geq 2$. Let $B$ be a finite subset of $H^n(F, \mu_{\ell}^{\otimes m})$. If $\ell \neq 2$, there exists a field extension of degree at most $\ell^2$ which splits all elements in $B$. If $\ell=2$, there exists a field extension of degree $8$ splitting $B$.
\end{cor}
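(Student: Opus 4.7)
The plan is to run the three-stage strategy of the proof of Theorem \ref{MAIN1}: split the ramification, then split the specializations at the generic points of the components of the special fibre, then split them at its closed points. Under the hypothesis ${}_{\ell}\cd(k)\leq n-2$, the latter two stages will turn out to be costless, so only the ramification-splitting step contributes to the degree bound. Accordingly, I first apply Proposition \ref{RamB} to produce an extension $L/F$ of degree $\ell^{2}$ (respectively $8$ when $\ell=2$) such that every class in $B_{L}:=\{\alpha\otimes_{F}L:\alpha\in B\}$ is unramified on some regular projective model $\mathcal{Y}\to\Spec R$ of $L$. It then suffices to prove that $B_{L}$ is in fact zero in $H^{n}(L,\mu_{\ell}^{\otimes m})$, so that $L$ already splits $B$ and $\ind(B)\leq[L:F]$.

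For this, observe that every residue field arising from a point on the special fibre $Y$ of $\mathcal{Y}$ has vanishing top cohomology with coefficients in $\mu_{\ell}^{\otimes m}$. Indeed, for each irreducible component $U$ of $Y$ the function field $\kappa(U)$ is finitely generated of transcendence degree one over $k$, so ${}_{\ell}\cd(\kappa(U))\leq{}_{\ell}\cd(k)+1\leq n-1$ and hence $H^{n}(\kappa(U),\mu_{\ell}^{\otimes m})=0$. For each closed point $P\in Y$ the residue field $\kappa(P)$ is a finite extension of $k$, so ${}_{\ell}\cd(\kappa(P))\leq n-2$ and $H^{n}(\kappa(P),\mu_{\ell}^{\otimes m})=0$. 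Consequently the specialization of each $\alpha\in B_{L}$ along every divisorial valuation centered on $Y$, and at every closed point of $Y$, vanishes identically.

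Finally, I invoke the patching local-global machinery used in the proof of Theorem \ref{MAIN1}, which controls an unramified class in $H^{n}(L,\mu_{\ell}^{\otimes m})$ through its restrictions to the overfields $L_{\xi}$ for $\xi\in\mathcal{U}\cup\mathcal{P}$ and, for unramified classes, through the specializations to the residue fields analysed above. In Theorem \ref{MAIN1} this stage contributes the factor $\sbd_{\ell}(k(t))\cdot\sbd_{\ell}(k)$; here it contributes only the factor $1$, because the relevant residue cohomology vanishes, so no further extension is required and the bound reduces to $\ell^{2}$ (respectively $8$) coming entirely from Proposition \ref{RamB}.

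The principal obstacle, and essentially the only non-formal point of the plan, is to verify the $H^{n}$-analogues of Lemmas \ref{WeakRas} and \ref{WeakRas2}: the arguments used there to split unramified Brauer classes simultaneously at the generic points of the components and at the closed points of the special fibre must be adapted to $H^{n}$-classes with $n$ arbitrary, with the bounds arising from the generalized Brauer dimension of $k(t)$ and of $k$ replaced by the vanishing of $H^{n}(\kappa(U),\mu_{\ell}^{\otimes m})$ and $H^{n}(\kappa(P),\mu_{\ell}^{\otimes m})$ recorded above. I expect this to be a mild adaptation, since those lemmas engage the residue fields only through the splitting invariant of their top cohomology classes, and in our setting this invariant is trivially $1$.
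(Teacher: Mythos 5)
Your overall strategy is the same as the paper's: apply Proposition \ref{RamB} to obtain $L/F$ of degree $\ell^{2}$ (resp.\ $8$), use the hypothesis ${}_{\ell}\cd(k)\leq n-2$ together with Serre's bound ${}_{\ell}\cd(k(t))\leq n-1$ to see that the relevant residue fields have no nonzero classes in degree $n$, and conclude that $L$ already kills $B$. The cohomological-dimension computations you give for $\kappa(U)$ and $\kappa(P)$ are exactly the ones in the paper.

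The gap is in the final step, and the place where you locate the remaining work is the wrong one. Lemmas \ref{WeakRas} and \ref{WeakRas2} are devices for manufacturing a global field extension out of prescribed local extensions; in this corollary no further extension is being built, so there is nothing in them to adapt, and adapting them would not close the argument. What is actually needed, and what the paper's proof supplies, is the following chain: (i) purity for the two-dimensional complete regular local rings $\widehat{R_P}$ (the cited result \cite[Theorem 2.1]{SM19}), so that a class unramified at all height-one primes of $\widehat{R_P}$ lifts to $H^{n}(\widehat{R_P},\mu_{\ell}^{\otimes m})$ --- without this the ``specialization at a closed point'' you invoke is not even defined; (ii) the rigidity statement that vanishing of the specialization over the residue field forces vanishing over $L_{\eta_i}$, resp.\ $L_{P}$ (\cite[Pg 224, Corollary 2.7]{Mi80}); (iii) the passage from $\alpha_{L_{\eta_i}}=0$ to $\alpha_{L_{U_i}}=0$ on a dense open $U_i$ of each component (\cite[Proposition 3.2.2]{HHK15(2)}); and (iv) the local-global principle for higher Galois cohomology with respect to patches (\cite[Corollary 3.1.6]{HHK15(2)}), which is a genuine theorem and not a formal consequence of the vector-space or torsor patching used for Brauer classes in Theorem \ref{MAIN1}. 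With these four inputs named, your argument is the paper's argument; without them, the assertion that ``no further extension is required'' is a statement of the desired conclusion rather than a proof of it.
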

\begin{proof}
Let $\mathscr{X}$ be a regular projective model of $F$ such that the ramification divisor of $B$ is a strict normal crossing divisor. Let $L/F$ be a field extension as in Proposition \ref{RamB}, and let $\mathscr{Y}$ be a regular projective model of $L$. Let $\{Y_i\}$ be the collection of irreducible components of the special fiber of $\mathscr{Y}$ with respective generic points $\{ \eta_i\}$. Let $\alpha$ be a non-trivial element of $B$. Since $\alpha \otimes L$ is unramified at every discrete valuation of $L$, we may specialize it to the function field $k(Y_i)$. Since $\cd_{\ell}(k) \leq n -2$, one has $\cd_{\ell}(k(Y_{i})) \leq n-1$. Therefore $\alpha_{k(Y_i)} = 0$ and so at the completion $L_{\eta_{i}}$ of $L$ we have $\alpha \otimes L_{\eta_i} = 0$. By \cite[Proposition 3.2.2]{HHK14}, there exists a dense open affine $U_i$ of $Y_i$ which does not contain points of any other irreducible component such that $\alpha \otimes L_{U_i} = 0$. Let $\mathcal{P}$ be the complement of $\cup U_i$ in the special fiber of $\mathscr{Y}$. For every $P$ in $\mathcal{P}$, $\alpha \otimes L_{P}$ is unramified at every height one prime of $\widehat{R_{P}}$. Thus by \cite[Theorem 2.1]{SM19}, $\alpha \otimes L_{P}$ comes from $H^n_{\acute{e}t}(\widehat{R_P}, \mu_{l}^{\otimes m})$. We may specialize $\alpha$ at the residue field $k(P)$ of $\widehat{R_P}$. Since $\cd_{\ell}(k) \leq n-2$, we have $\alpha_{k(P)} = 0$ Thus by \cite[Pg 224, Corollary 2.7]{Mi80}, $\alpha \otimes L_{P} = 0$. Using \cite[Corollary 3.1.6]{HHK14}, we conclude that $\alpha \otimes L = 0$.
\end{proof}

\section{Main Result}
The next two lemmas allow us to construct a global extension of $F$ from extensions $L_{\xi}/F_{\xi}$ for suitable patches $\xi$ in $\mathcal{P} \cup \mathcal{U}$. Lemma \ref{WeakRas} is well known and Lemma \ref{WeakRas2} is a variant of \cite[Theorem 2.6]{HHKPS17} and ideas used in the proof of \cite[Proposition 2.5]{HHKPS17}. However, we do not need the hypothesis that $\ch(k) = 0$ needed there, since we work only with unramified extensions. Working with unramified extensions also has the advantage that we also do not require the points $P$ in Lemma \ref{WeakRas2} to be unibranched.
\begin{lemma}
\label{WeakRas}
Let $\mathscr{X}/\Spec{R}$ be a regular model of a semiglobal field $F$ (see Notation \ref{notn1}). Let $\{X_1, \cdots X_n \}$ be the irreducible components of the special fibre $\mathscr{X}_{k}$. Let $\eta_i$ be the generic points of $X_i$ and $F_{\eta_{i}}$ denote the completion of $F$ at the discrete valuation given by $\eta_i$. Suppose that $L_{\eta_{i}}/F_{\eta_{i}}$ are separable field extensions of degree $d$. Then there exists a field extension $L/F$ of degree $d$ such that $L \otimes_{F}F_{\eta_{i}} \cong L_{\eta_{i}}$.
\end{lemma}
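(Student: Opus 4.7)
The plan is to combine Artin--Whaples weak approximation with Krasner's lemma, the standard recipe for producing a single global field extension realizing prescribed local behaviour at finitely many valuations. Since in the setting of the paper the $L_{\eta_i}/F_{\eta_i}$ are unramified extensions of complete discretely valued fields, they are separable, and by the primitive element theorem each has the form $L_{\eta_i} = F_{\eta_i}[x]/(f_i(x))$ for a monic irreducible polynomial $f_i \in F_{\eta_i}[x]$ of degree $d$.

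The valuations $v_{\eta_1},\dots,v_{\eta_n}$ are pairwise inequivalent discrete valuations on $F$ (they come from distinct height-one primes of the regular surface $\mathcal{X}$), and each $F_{\eta_i}$ is by definition the completion of $F$ at $v_{\eta_i}$. Artin--Whaples weak approximation then asserts that the diagonal image of $F$ is dense in $\prod_{i} F_{\eta_i}$. Approximating the coefficients of the $f_i$ simultaneously, I would produce a single monic polynomial $g \in F[x]$ of degree $d$ whose image in $F_{\eta_i}[x]$ is arbitrarily close to $f_i$ for every $i$.

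By Krasner's lemma applied inside an algebraic closure of each $F_{\eta_i}$, if the approximation is tight enough then $g$ remains irreducible over $F_{\eta_i}$ and there is an $F_{\eta_i}$-algebra isomorphism $F_{\eta_i}[x]/(g) \cong F_{\eta_i}[x]/(f_i) = L_{\eta_i}$. Since $g$ is already irreducible over the overfield $F_{\eta_i}$, it is \emph{a fortiori} irreducible over $F$, so $L := F[x]/(g)$ is a field extension of degree $d$ and $L \otimes_F F_{\eta_i} = F_{\eta_i}[x]/(g) \cong L_{\eta_i}$ for each $i$, as required. There is no substantive obstacle; the only point requiring care is separability of the given local extensions so that both the primitive element theorem and Krasner's lemma are applicable, and this is automatic in the unramified setting to which the lemma is applied.
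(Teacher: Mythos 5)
Your proof is correct and follows exactly the paper's own argument: approximate the minimal polynomials of the $L_{\eta_i}$ simultaneously via weak approximation and conclude with Krasner's lemma. You are in fact slightly more careful than the paper, which silently assumes separability of the $L_{\eta_i}/F_{\eta_i}$ (not stated in the lemma's hypotheses) and does not spell out why the resulting $g$ is irreducible over $F$.
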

\begin{proof}
Since $L_{\eta_{i}}/F_{\eta_{i}}$ are separable, $L_{\eta_{i}} \cong F_{\eta_{i}}[x]/ \langle f_{\eta_{i}}(x) \rangle$. By weak approximation we may find a polynomial $f(x)$ in $F[x]$ sufficiently close to $f_{\eta_{i}}(x)$ so that by Krasner's lemma we have $L_{\eta_{i}} \cong F[x]/\langle f(x) \rangle \otimes_F F_{\eta_{i}} $. Taking $L = F[x]/ \langle f(x) \rangle$, the claim follows.
\end{proof}
\begin{lemma}
\label{WeakRas2}
 Let $\mathscr{X}/\Spec{R}$ be a normal projective model of a semiglobal field $F$ (see Notation \ref{notn1}). Let $\mathcal{P}$ be a finite non-empty set of closed points on the special fibre $\mathscr{X}_{k}$ of $\mathscr{X}$ which includes points where irreducible components of $\mathscr{X}_{k}$ meet. For each point $P$ in $\mathcal{P}$, let $l(P)/k(P)$ be finite separable extensions of the residue fields $k(P)$ of degree $d$. Let $L_P/F_P$ be their unramified lifts. Then there exists a field extension $L/F$ of degree $d$ such that $L \otimes_{F} F_{P} \cong L_{P}$.
\end{lemma}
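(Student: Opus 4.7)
The plan is to produce $L/F$ by invoking the patching theorem for separable algebras over the inverse factorization system $\{F_\xi\}_{\xi \in \mathcal{P}\cup\mathcal{U}}$ of Section \ref{Patch}. Concretely, for every $\xi \in \mathcal{P} \cup \mathcal{U}$ I would build a rank-$d$ separable $F_\xi$-algebra $L_\xi$ together with compatible isomorphisms $\phi_\wp \colon L_U \otimes_{F_U} F_\wp \xrightarrow{\sim} L_P \otimes_{F_P} F_\wp$ on each branch $\wp$ at a pair $(P,U)$ with $P \in \overline{U} \cap \mathcal{P}$, patch them to a separable $F$-algebra $L$, and then argue $L$ is in fact a field.

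At each $P \in \mathcal{P}$, write $l(P) = k(P)[x]/\langle \bar g_P(x) \rangle$ with $\bar g_P$ monic and separable of degree $d$, and lift $\bar g_P$ to a monic polynomial $g_P(x) \in \widehat{R_P}[x]$. By Hensel, $\widetilde L_P := \widehat{R_P}[x]/\langle g_P(x) \rangle$ is finite étale of rank $d$ over $\widehat{R_P}$ with special fibre $l(P)$; by uniqueness of the unramified lift, its generic fibre is $L_P$.

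For each $U \in \mathcal{U}$ I would build $L_U$ by approximation. The branches $\wp$ of $U$ incident at the finitely many points $P \in \overline{U} \cap \mathcal{P}$ determine pairwise inequivalent discrete valuations on $F_U$, since they correspond to distinct height-one primes of $\widehat{R_P}$ and the inclusions $\widehat{R_U} \subset \widehat{R_\wp}$ described in Section \ref{Patch} restrict to distinct valuations on $F_U$. Applying weak approximation on $F_U$ to these finitely many valuations, I would choose a monic polynomial $g_U(x) \in \widehat{R_U}[x]$ of degree $d$ whose coefficients are $\wp$-adically close to those of $g_P(x)$ for every such branch. For a sufficiently fine approximation, Krasner's lemma (applied to the roots of $g_P$ over $F_\wp$) forces $g_U$ to be separable and produces a canonical isomorphism $\widehat{R_U}[x]/\langle g_U(x) \rangle \otimes_{\widehat{R_U}} \widehat{R_\wp} \xrightarrow{\sim} \widetilde L_P \otimes_{\widehat{R_P}} \widehat{R_\wp}$. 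Taking $L_U$ to be the generic fibre of $\widehat{R_U}[x]/\langle g_U(x) \rangle$ and $\phi_\wp$ the generic fibre of this isomorphism gives the required compatible data.

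The patching theorem for separable algebras (the separable-algebra version of \cite[Corollary 3.4]{HHK15} recorded in Section \ref{Patch}) then glues $\{(L_P, L_U, \phi_\wp)\}$ to a unique rank-$d$ separable $F$-algebra $L$ with $L \otimes_F F_\xi \cong L_\xi$ for all $\xi$. Any nontrivial product decomposition of $L$ would base-change to a nontrivial product decomposition of $L_P$, contradicting that $L_P/F_P$ is a field; hence $L/F$ is a field extension of degree $d$ with $L \otimes_F F_P \cong L_P$, as desired. The main obstacle is the approximation step: one must both certify that the branch valuations on $F_U$ are distinct (which is where the structural facts about $\widehat{R_U} \subset \widehat{R_\wp}$ recalled in Section \ref{Patch} are used) and ensure that the approximating $g_U$ has \emph{integral} coefficients in $\widehat{R_U}$, not merely in $F_U$, so that it defines an étale $\widehat{R_U}$-algebra compatible with the $\widetilde L_P$. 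Because each $g_P$ already lies in $\widehat{R_P}[x]$ and the relevant topology on $F_\wp$ is the $t$-adic one, a $t$-adically fine approximation taken inside $\widehat{R_U}[x]$ meets both requirements; unlike \cite[Theorem 2.6]{HHKPS17}, no characteristic zero or unibranched hypothesis is needed because we only work with unramified extensions, for which the étale lift is canonical.
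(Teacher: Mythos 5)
Your construction at the points $P$ (Hensel lift of $\bar g_P$) and your final step ($L$ is a field because any idempotent would survive base change to $F_P$) are fine, but the approximation step over $F_U$ has a genuine gap, for two compounding reasons. First, the branches of $U$ incident at the various $P\in\overline{U}\cap\mathcal{P}$ do \emph{not} induce pairwise inequivalent valuations on $F_U$: as recalled in Section \ref{Patch}, $\wp\cap R_U=\eta$ is the generic point of $U$ for every branch, so all of them restrict to the single divisorial valuation of the component containing $U$. Weak approximation on $F_U$ therefore cannot prescribe independent behaviour at the different branches. Second, even for a single branch, $\widehat{R_U}$ is not $\wp$-adically (equivalently $t$-adically) dense in $\widehat{R_\wp}$: the closure of $F_U$ in $F_\wp$ is its completion at $\eta$, whose residue field is $k(U)$, whereas $F_\wp$ has residue field $k(\wp)=\Frac(\widehat{R_P}/\wp)$, a completion of $k(U)$ at a place. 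Concretely, for $\mathcal{X}=\mathbb{P}^1_{\mathbb{Z}_p}$, $P$ the origin of the special fibre and $U$ its complement in the affine line, the image of $\widehat{R_U}$ in $k(\wp)=\mathbb{F}_p((x))$ lands in rational functions, so a lift $g_P\in\mathbb{Z}_p[[x]][y]$ whose reduction involves a non-rational power series admits no $\wp$-adically close approximant from $\widehat{R_U}[y]$. Thus the isomorphisms $\phi_\wp$ you need for patching cannot be produced this way.

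The repair is to move the approximation down to the residue curve, which is what the paper does: the branches on $U$ correspond to distinct places of the function field $k(U)$ of a curve over $k$, and $k(U)$ \emph{is} dense in $\prod_{\wp\in\mathcal{B}_U}k(\wp)$ for those place topologies. One first uses weak approximation and Krasner over $k(U)$ to build a separable algebra $l_U/k(U)$ with $l_U\otimes_{k(U)}k(\wp)\cong l(\wp)$, takes the normalization $V\to U$ in $l_U$ and shrinks $U$ so that it is \'etale, then lifts it to an \'etale algebra $\widehat{S_V}/\widehat{R_U}$ by \cite[Corollaire 8.4]{GR71}; the branch compatibility $L_V\otimes_{F_U}F_\wp\cong L_\wp$ then follows not from any approximation over $F_\wp$ but from the \emph{uniqueness} of \'etale lifts over the complete ring $\widehat{R_\wp}$ (\cite[Theorem 6.1]{GR71}). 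Note also that shrinking $U$ creates finitely many new closed points $\mathcal{P}_1$ at which compatible algebras must be supplied (by another approximation) before patching; your setup omits this step as well.
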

\begin{proof}
For $P \in \mathcal{P}$, let $\wp$ be a branch incident at $P$. Observe that $L_P\otimes_{F_P}F_{\wp} \cong \prod_{i}L_{\wp_i}$ where $L_{\wp_i}/F_{\wp}$ are finite unramified field extensions of $F_{\wp}$. Let $l(\wp_i)/k(\wp)$ be the corresponding residue field extensions. Set $l(\wp) := \prod_{i}l(\wp_i)$, $L_{\wp} := \prod L_{\wp_i}$. Going by our notation, we have that $L_{P}\otimes_{F_P}F_{\wp} \cong L_{\wp}$. Thus associated to every branch $\wp$ on the special fiber of $\mathscr{X}$, we obtain the separable algebras $L_{\wp}/F_{\wp}$ and $l(\wp)/k(\wp)$. To obtain a global extension $L/F$, we need to construct separable algebras $L_{V}/F_{U}$ for suitable open sets $U$ in the complement of $\mathcal{P}$ in the special fiber such that $L_{V} \otimes_{F_{U}}F_{\wp} \cong L_{P} \otimes_{F_{P}}F_{\wp}$ for every triple $(P, U, \wp)$.

Let $\mathcal{U}$ be the set of irreducible components of the the complement of $\mathcal{P}$ on the special fibre. For each $U$ in $\mathcal{U}$, let $\mathcal{B}_{U}$ denote the set of branches lying on $U$. Note that the function field $k(U)$ is dense in $\prod_{\wp \in \mathcal{B}_{U}} k(\wp)$ for every $\wp$ in $\mathcal{B}_{U}$. Using Krasner's lemma and weak approximation, there exists a separable algebra $l_{U}/k(U)$ such that $l_{U}\otimes_{k(U)} k(\wp) \cong l(\wp)$ for every branch $\wp$ in $\mathcal{B}_{U}$. 

Let $V \rightarrow U$ be the normalization of $U$ in $l_{U}$. After shrinking $U$ if necessary, we may assume that the map is \' etale. We abuse notation and denote this new open set by $U$. Let $ \mathcal{P} \cup \mathcal{P}_1$ be the complement of these new open sets on the special fiber $\mathscr{X}_{k}$.  By \cite[Corollaire 8.4]{GR71}, we may uniquely lift $V \rightarrow U$ to get an \'etale algebra $\widehat{S_{V}} /\widehat{R_{U}} $. Note that $\widehat{S_{V}}$ is a product of domains. Let $L_{V}$ be the product of their fraction fields. We claim that $L_{V} \otimes F_{\wp} \cong L_{\wp}$ for each triple $(U, P, \wp)$. 

Note that for every branch $\wp$ incident at $P$ in $\mathcal{P}$, we have the following sequence of isomorphisms: $\widehat{S_{V}} \otimes_{\widehat{R_{U}}}\widehat{R_{\wp}}\otimes_{\widehat{R_{\wp}}} \widehat{R_{\wp}}/ \wp  \cong \widehat{S_{V}} \otimes_{\widehat{R_{U}}}\widehat{R_{U}}\otimes_{\widehat{R_{U}}} k(\wp) \cong \widehat{S_{V}} \otimes_{\widehat{R_{U}}}k(U)\otimes_{k(U)} k(\wp) \cong l_{U}\otimes_{k(U)} k(\wp) \cong l(\wp)$. Let $\widehat{S_{\wp}}$ be the integral closure of $\widehat{R_{\wp}}$ in $L_{\wp}$. By \cite[Theorem 6.1]{GR71}, we have that $\widehat{S_{V}}\otimes_{\widehat{R_{U}}} \widehat{R_{\wp}} \cong \widehat{S_{\wp}} $ and therefore $L_{V}\otimes_{F_{U}}F_{\wp} \cong L_{\wp}$. 

The algebras $L_{V}/F_{U}$ induce algebras of the same dimension on the branches incident at the  points in $\mathcal{P}_1$. Using weak approximation again, there exist compatible algebras at these points. All this patches together to give an algebra $L/F$ such that $L\otimes_{F}F_{P} \cong L_{P}$. Since $L_P/F_P$ is a field extension of degree $d$, so is $L/F$.  
\end{proof}

\begin{remark}
The lemma shows that \textit{unramified} extensions at closed points of a projective normal model induce a global extension of the function field. Note that in general, arbitrary extensions of closed points do not induce global extensions as remarked in \cite[Remark 2.7(b)]{HHKPS17}.
\end{remark}
\comm{
 
\begin{proof}
Blow up at the attaching points and get another model of $F$. Abusing notation, we call this model $\mathcal{X}$. Since $U_i$ does not contain the attaching points, they are not changed, neither is the field extension $L_{U_i}/F_{U_i}$. Let $E_1$ and $E_2$ be the exceptional curves attached to $X_i$ and $P_1$ and $P_2$ be the intersection points of $E_1$ and $E_2$ with $X_i$ respectively. 
Denote by $\wp_{i1}$ and $\wp_{i2}$ the branches along $U_i$ at $P_1$ and $P_2$ respectively and by $\wp_1$ and $\wp_2$ the branches along $E_1$ and $E_2$ at $P_1$ and $P_2$ respectively. For $j=1,2$, suppose that $L_{U_i}\otimes_{F_{U_i}}F_{\wp_{ij}} \cong \prod_{k} E_{\wp_{ij}}^{(k)}$ where $E_{\wp_{ij}}^{(k)}/F_{\wp_{ij}}$ is a finite separable field extension. Using \cite[Proposition 2.3]{CHHKPS17}, there exist separable field extension $L_{P_j}^{(k)}/F_{P_j}$ such that $L_{P_j}^{(k)}\otimes_{F_{P_j}} F_{\wp_{ij}} \cong E_{\wp_{ij}}^{(k)}$ and $L_{P_j}^{(k)} \otimes_{F_{P_j}} F_{\wp_j}$ is a split algebra. Let $V_1 \subset E_1$ and $V_2 \subset E_2$ be the complement of the attaching points of $E_1$ and $E_2$. Let $L_{V_{i}}/F_{V_{i}}$ be the split algebra of the same degree. By construction, these \textcolor{red}{accent}etale algebras patch together to give an algebra $L/F$ inducing them of the same degree and which is necessarily a field extension since $L_{U_{i}}/F_{U_{i}}$ are field extensions. 
\end{proof}
}
We are now in a position to prove our main theorem. Lemma \ref{WeakRas} and Lemma \ref{WeakRas2} allows us to construct field extensions which split $B$, first generically on the special fiber and then on the remaining closed points.
\begin{thm}
\label{MAIN1}
Let $F$, $\ell$ and $k$ be as above in Notation \ref{notn1}. If $\ell \neq 2$,
 \[\sbd_{\ell}(F) \leq {\ell}^2\sbd_{\ell}(k(t))\sbd_{\ell}(k).\]  
If $\ell=2$, \[\sbd_{2}(F) \leq 8\sbd_2(k(t))\sbd_2(k).\]
\end{thm}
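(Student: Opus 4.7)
The plan is to execute the three-step splitting strategy outlined in the introduction, where each step contributes one factor to the claimed bound.

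First, invoke Proposition \ref{RamB} to obtain an extension $L_1/F$ of degree $\ell^2$ (or $8$ when $\ell=2$) killing the ramification of $B$, so that every $\alpha \in B_{L_1}$ lies in $H^2(\widehat{R_v}, \mu_\ell)$ for every divisorial valuation $v$. Fix a regular projective model $\mathcal{X}$ of $L_1$ with special fiber $X = \bigcup X_i$, and for each generic point $\eta_i$ of $X_i$ specialize $B_{L_1}$ to a finite subset $B_i \subset {}_{\ell}\br(k(X_i))$.

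Second, since $k(X_i)$ is a finite extension of $k(t)$, the definition of the generalized Brauer dimension gives $\ind(B_i) \leq \sbd_{\ell}(k(t))$. Pick finite separable splitting extensions $m_i/k(X_i)$ of a common degree $d_1 \leq \sbd_{\ell}(k(t))$ (padding by auxiliary separable extensions if necessary; this is compatible because all indices in sight are $\ell$-primary, so least common multiples and maxima of the individual splitting degrees agree). Lift each $m_i$ to an unramified extension of $L_{1,\eta_i}$ and apply Lemma \ref{WeakRas} to produce a global extension $L_2/L_1$ of degree $d_1$ realizing these local lifts. After replacing $\mathcal{X}$ by a suitable regular model of $L_2$, the argument of Corollary \ref{top} then shows, via \cite[Proposition 3.2.2]{HHK15(2)}, that $B_{L_2}$ vanishes on $L_{2,U_i}$ for some dense open $U_i$ of each component of the special fiber.

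Third, let $\mathcal{P}$ be the finite complement of $\bigcup U_i$ in the special fiber, augmented with the pairwise intersection points of the components. For each $P \in \mathcal{P}$, the class $B_{L_2}$ is still unramified at $\widehat{R_P}$, hence by \cite[Theorem 2.1]{SM19} descends to $H^2(\widehat{R_P}, \mu_\ell)$ and further specializes to $B_P \subset {}_{\ell}\br(k(P))$. Because $k(P)/k$ is finite, $\ind(B_P) \leq \sbd_{\ell}(k)$, so we may pick separable extensions $l(P)/k(P)$ of a common degree $d_2 \leq \sbd_{\ell}(k)$ splitting $B_P$. Lift these to unramified extensions $L_P/L_{2,P}$ and apply Lemma \ref{WeakRas2} to get a global extension $L_3/L_2$ of degree $d_2$ with $L_3 \otimes_{L_2} L_{2,P} \cong L_P$. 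On the remaining patches the class $B_{L_3}$ vanishes because $L_3/L_2$ is \'etale there and $B_{L_2}$ was already split; at each $P \in \mathcal{P}$ it vanishes by construction. The local-global principle for $PGL_n$-torsors (\cite[Theorem 5.10]{HHK15}) then forces $B_{L_3} = 0$, so
\[
 \ind(B) \leq [L_3:F] = [L_1:F]\cdot[L_2:L_1]\cdot[L_3:L_2] \leq \ell^2 \cdot \sbd_{\ell}(k(t))\cdot \sbd_{\ell}(k),
\]
with $\ell^2$ replaced by $8$ when $\ell=2$, as desired.

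The main obstacle is the degree-matching required to feed local data into Lemmas \ref{WeakRas} and \ref{WeakRas2}: the local splitting degrees can vary between components (and between closed points), yet the lemmas demand a single common degree $d_1$ (resp.\ $d_2$). This will be handled by exploiting the $\ell$-primary nature of the classes so that the relevant least common multiples do not exceed $\sbd_{\ell}(k(t))$ (resp.\ $\sbd_{\ell}(k)$), together with the existence of separable extensions of any prescribed degree over the infinite fields $k(X_i)$ and $k(P)$. A secondary technical point, the compatibility of the prescribed local extensions across incident branches, is already absorbed into the \'etale lifting step of Lemma \ref{WeakRas2} using \cite{GR71}.
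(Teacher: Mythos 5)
Your proposal is correct and follows essentially the same route as the paper: Proposition \ref{RamB} to kill ramification, specialization to the components of the special fiber plus Lemma \ref{WeakRas} for the factor $\sbd_{\ell}(k(t))$, specialization at the remaining closed points plus Lemma \ref{WeakRas2} for the factor $\sbd_{\ell}(k)$, and the patching local-global principle to conclude. The degree-matching issue you flag is real but is handled exactly as you suggest (the paper simply says one ``may as well assume'' the local splitting extensions all have the maximal degree), so there is no substantive difference.
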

\begin{proof}
If $\ell \neq 2$, let $K/F$ be a degree $\ell^2$ extension that splits the ramification of $B$ with respect to discrete valuations of $F$ centered on a two-dimensional regular projective model, as chosen in Proposition \ref{RamB} or if $\ell=2$, let $K/F$ be a degree $8$ extension as chosen in Proposition \ref{RamB}. Let $\mathscr{X}$ be a regular projective model of $K$ such that $\{ X_1, \cdots X_n \}$ are the irreducible components of the special fibre $\mathscr{X}_{k}$.
 
We prove the statement in two steps. First we show that there exists a field extension $M/K$ of degree $\sbd_{\ell}(k(t))$ with some normal model $\mathscr{Y}$ such that $B$ is split on all but finitely many closed points of the special fibre of $\mathscr{Y}$. After that, in Step 2, we construct another extension $L/M$ of degree $\sbd_{\ell}(k)$ which splits $B$.

\textbf{Step 1:} Let $\eta_{i}$ be the generic points of $X_i$ and let $K_{\eta_{i}}$ denote the completion of $K$ at $\eta_i$. Since $B$ is unramified, we may specialize every element $\alpha$ in $B$ to the residue field $k(X_i)$ of $K_{\eta_i}$. There exists a separable field extension $m_i/k(X_i)$ of degree at most $\sbd_{\ell}(k(t))$ splitting $\alpha_{k(X_i)}$. We may as well assume that $m_i/k(X_i)$ has degree $\sbd_{\ell}(k(t))$. Let $M_i/K_{\eta_{i}}$ denote the unramified lifts of the extensions $m_i/k(X_i)$. Since $\alpha_{m_{i}}$ is split, $\alpha \otimes M_i$ is also split. By Lemma \ref{WeakRas}, there exists a field extension $M/K$ of degree $\sbd_{\ell}(k(t))$ such that $M\otimes_{K}K_{\eta_i} \cong M_i$. Let $f :\mathscr{Y} \rightarrow \mathscr{X}$ be the normalization of $\mathscr{X}$ in $M$. In view of our choice of $M/K$, note that for $i = 1, \cdots n$, $\eta_i^{\prime} := f^{-1}(\eta_i)$ are the generic points of $Y_i := f^{-1}(X_i)$, the irreducible components of the special fibre of $\mathscr{Y}$ and $M_i$ are the respective completions of $M$ at those points. Since $\alpha \otimes M_{i}$ is split, by \cite[Proposition 5.8]{HHK15}, there exist non-empty dense affine open subsets $U_i \subset Y_i$ which do not meet any other component and such that $\alpha \otimes M_{U_{i}}$ is split. Thus, $\alpha$ is split everywhere on the special fibre except possibly at the complement of the open sets $U_i$.

\textbf{Step 2}: Let $\mathcal{U}$ be the set of the open sets $U_i$ from Step 1. Let $\mathcal{P}$ be the complement of $\cup_{i} U_i$ on the special fibre $Y$ of $\mathscr{Y}$. Note that $\alpha \otimes M_{P}$ comes from a class on $\widehat{O_{\mathscr{Y},P}}$ for all points $P$ in $\mathcal{P}$. This is because $\alpha$ is unramified on the regular local ring $\widehat{O_{\mathscr{X}, f(P)}}$. We may therefore specialize it to the residue field $k(P)$ of $\widehat{O_{\mathscr{Y}, P}}$. Let $l(P)/k(P)$ be separable field extensions of degree $\sbd_{\ell}(k)$ splitting $\alpha_{k(P)}$. Let $L_P/M_P$ be the lift. By \cite[Pg 224, Corollary 2.7]{Mi80}, $\alpha \otimes L_{P}$ is split. By Lemma \ref{WeakRas2}, there exists a field extension $L/M$ of degree $\sbd_{\ell}(k)$ inducing $l(P)/k(P)$. 

We claim that $\alpha \otimes L$ is split. Let $g: \mathscr{Z} \rightarrow \mathscr{Y}$ be the normalization of $\mathscr{Y}$ in $L$. Let $\mathcal{P}^{\prime}$ be the inverse images of the points $P$ in $\mathcal{P}$ under the normalization map $g$. Let $\mathcal{U}^{\prime}$ be the set of irreducible components of the complement of $\mathcal{P}^{\prime}$ in the special fibre $Z$. Note that for each $U^{\prime}$ in $\mathcal{U}^{\prime}$, there exists some $U_i$ in $\mathcal{U}$ such that $ M_{U_{i}} \subset L_{U^{\prime}}$. Since $\alpha \otimes M_{U_i}$ is split, so is $\alpha \otimes L_{U^{\prime}}$. Furthermore, for each $P^{\prime}$ in $\mathcal{P}^{\prime}$, $L_{P^{\prime}}$ is an unramified extension of $F_P$ for some $P$ induced by the residue field extension $l(P)/k(P)$ which splits $\alpha_{k(P)}$. Thus $\alpha \otimes L_{P^{\prime}}$ is split. By \cite[Proposition 6.3]{HH10}, $(\mathcal{U^{\prime}}, \mathcal{P^{\prime}})$ forms an inverse factorization system. By Theorem \ref{HHK}, it follows that $\alpha \otimes L$ is split for every $\alpha$ in $B$.
\end{proof}

\comm{In Step 1 we are assuming the fact that there is a one to one correspondence between the irreducible components in $\mathcal{Y}$ and $\mathcal{X}$. Why should this be true? This is just by construction. Let $\eta_i$ be the generic point downstairs. The possible extensions of the valuation given by this generic point in $L$ are in one to one correspondence with the maximal ideals in $L \otimes_F \widehat{F_{\eta_i}}$. By construction, there is only one. We most definitely need a citation above. }

\begin{cor}
\label{CORMAIN}
Let $F$, $\ell$ and $k$ be as in Notation \ref{notn1}. If $\ell \neq 2$,
\[ \sbd_{\ell^m}(F) \leq ({\ell^2})^m [\sbd_{\ell}(k(t))]^{m} [\sbd_{\ell}(k)]^{m}. \]
If $\ell = 2$, \[ \sbd_{2^m}(F) \leq 8^m [\sbd_{2}(k(t))]^m [\sbd_{2}(k)]^m. \]
\end{cor}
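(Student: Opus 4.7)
The plan is to proceed by induction on $m$, peeling off one power of $\ell$ at each step using Theorem \ref{MAIN1} as the engine. The base case $m=1$ is precisely Theorem \ref{MAIN1}, noting that the definition of $\sbd_\ell(F)$ already ranges over finite extensions $L/F$, so the bound there applies uniformly to finite subsets of ${}_\ell\br(L)$ for any finite $L/F$.

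For the inductive step, suppose the statement holds for $m-1$. Let $L/F$ be a finite extension and $B \subset {}_{\ell^m}\br(L)$ a finite subset. The key trick is that multiplication by $\ell^{m-1}$ sends $B$ into the $\ell$-torsion: set $B_1 := \{ \ell^{m-1}\alpha : \alpha \in B \} \subset {}_\ell\br(L)$. Applying Theorem \ref{MAIN1} to the semi-global field $L$ (whose base residue field $k'$ is a finite extension of $k$, so that $\sbd_\ell(k') \leq \sbd_\ell(k)$ and $\sbd_\ell(k'(t)) \leq \sbd_\ell(k(t))$ by the supremum-over-finite-extensions built into the definition), I obtain a finite extension $K_1/L$ of degree at most $\ell^2 \sbd_\ell(k(t))\sbd_\ell(k)$ (respectively $8\sbd_2(k(t))\sbd_2(k)$ when $\ell=2$) which splits every element of $B_1$.

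Now consider $B \otimes K_1 \subset \br(K_1)$. For each $\alpha \in B$, we have $\ell^{m-1}(\alpha \otimes K_1) = (\ell^{m-1}\alpha)\otimes K_1 = 0$ since $\ell^{m-1}\alpha \in B_1$ is split by $K_1$. Hence $B \otimes K_1 \subset {}_{\ell^{m-1}}\br(K_1)$, and the inductive hypothesis (applied to $K_1$, which is a finite extension of $F$) produces a further finite extension $M/K_1$ of degree at most $(\ell^2)^{m-1}[\sbd_\ell(k(t))]^{m-1}[\sbd_\ell(k)]^{m-1}$ splitting $B \otimes K_1$. The composite field $M/L$ then splits all of $B$, and its degree is bounded by the product
\[
[M:K_1]\cdot[K_1:L] \leq (\ell^2)^m [\sbd_\ell(k(t))]^m [\sbd_\ell(k)]^m,
\]
with the corresponding $8^m$-version when $\ell=2$. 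Taking suprema over $L/F$ and $B$ yields the claimed bound on $\sbd_{\ell^m}(F)$.

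There is no real obstacle here; the only subtlety worth flagging is the remark above that $\sbd_\ell$ is monotone along finite extensions of the base residue field, which is what lets us keep invoking Theorem \ref{MAIN1} with the same constants $\sbd_\ell(k(t))$ and $\sbd_\ell(k)$ rather than having to track how the residue field grows at each inductive stage.
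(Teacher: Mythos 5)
Your proof is correct and follows essentially the same route as the paper: induction on $m$, multiplying $B$ by $\ell^{m-1}$ to land in ${}_{\ell}\br$, splitting that set via Theorem \ref{MAIN1}, and then applying the inductive hypothesis to the restriction of $B$, which now lies in the $\ell^{m-1}$-torsion. Your extra care about the supremum over finite extensions built into the definition of $\sbd$ (and the monotonicity of the residue-field invariants) is a point the paper leaves implicit, but it is the right justification.
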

\begin{proof}
We show this by induction on $m$. We show this only for $\ell \neq 2$ since the case for $\ell = 2$ is similar. The base case, $m = 1$, follows from Theorem \ref{MAIN1}. Suppose that the statement holds for $m-1$. Let $B$ be a finite subset in ${}_{\ell^{m}}\br(F)$. We denote by $\ell^{m-1}B$ the set in ${}_{\ell}\br(F)$ obtained by multiplying each element in $B$ by $\ell^{m-1}$. If $L/F$ is a field extension, we denote by $B_{L}$ the subset in ${}_{\ell^m}\br(L)$ obtained by restricting each element in $B$ to the field $L$. By Theorem \ref{MAIN1}, it follows that there exists a field extension $L/F$ of degree at most ${\ell}^2\sbd_{\ell}(k(t))\sbd_{\ell}(k)$ splitting $\ell^{m-1}B$. Thus, $B_{L}$ is a subset in ${}_{\ell^{m-1}}\br(L)$. By induction, there exists a field extension $M/L$ of degree at most $({\ell^2})^{m-1} [\sbd_{\ell}(k(t))]^{m-1} [\sbd_{\ell}(k)]^{m-1}$ splitting $B_{L}$. Thus, the extension $M/F$ splits all elements in $B$ and the result follows.
\end{proof}
\section{Lower bounds for splitting dimension}
\label{LAST}

We provide lower bounds for the splitting dimension for some fields in this section. For fields satisfying the hypothesis of Proposition \ref{FUB} below (examples include $\mathbb{Q}_p(t)$, $\mathbb{F}_p(x, y)$, $\mathbb{F}_p((x, y))$), using the lower bound in Proposition \ref{CDV} and the upperbound in Proposition \ref{FUB}, we can compute the splitting dimension of certain function fields of curves over complete discretely valued fields. For function fields of varieties over an algebraically closed fields we also record a lower bound for the splitting dimension in Proposition \ref{dDIM}.

If $F$ is a totally imaginary number field or a function field of a curve over a finite field, it is well known that $u(F) = 4$. Writing an anisotropic four dimensional quadratic form as a sum of two binary forms, we see that $i_s(F) \leq 4$. To construct a quadratic form with splitting index at least $4$, we use the Albert Brauer Hasse Noether Theorem. Let $\alpha$ be a non-trivial element in ${}_{2}\br(F)$. Let $v$ be a place where $\alpha$ is ramified. By Weak approximation and Krasner's Lemma, there exists a quadratic field extension $L/F$ such that $v$ is totally split in $L$. Note therefore that $\alpha \otimes L$ is non-trivial. Let $p$ be the norm form of $L/F$ and $p_{\alpha}$ be that of the quaternion algebra determined by the class $\alpha$. Consider the quadratic form $q = p \perp p_{\alpha}$. If $M/F$ splits $q$, $M$ contains $L$. Since $M$ splits $q\otimes L$, $2$ divides $[M:L]$. Therefore $[M:F] \geq 4$. This shows that $i_s(F) = 4$. We will adopt the same strategy to give the lower bounds in Propositions \ref{dDIM} and \ref{CDV}.

\begin{prop}
\label{FEXAMPLE}
Let $K$ be a complete discretely valued field with parameter $t$ and residue field $k$ of characteristic not equal to $2$. Then $i_s(K) \leq 2i_s(k)$.
\end{prop}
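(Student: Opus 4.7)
The plan is to reduce the splitting of an arbitrary form over a finite extension of $K$ to the splitting of a single residue form over the residue field $k$, at the cost of a ramified degree-$2$ extension that absorbs the uniformizer.

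First, let $M/K$ be a finite extension and $q$ a quadratic form over $M$; it suffices to bound $i(q)$ by $2 i_s(k)$. Since $M$ is henselian, it is itself complete discretely valued, with some parameter $\pi$ and residue field $k'$ a finite extension of $k$; in particular $\ch(k') \neq 2$ and $i_s(k') \leq i_s(k)$ by the definition of $i_s$ as a supremum over all finite extensions. I would then diagonalize $q$ with respect to $\pi$ as $q \cong q_1 \perp \pi q_2$, with $q_1, q_2$ having unit coefficients in the valuation ring of $M$, and denote by $\bar q_i$ the associated residue forms over $k'$.

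Next, set $M' := M(\sqrt{\pi})$, a field extension of degree at most $2$. Writing $\pi = \tau^2$ in $M'$, we get $q \otimes M' \cong q_1 \perp q_2$, a form whose coefficients are all units of the valuation ring of $M'$; moreover $M'$ is still complete discretely valued, with parameter $\tau$ and residue field $k'$. Let $\bar q := \bar q_1 \perp \bar q_2$, a quadratic form over $k'$ of the same dimension as $q$. By the definition of $i_s(k')$, choose a finite separable extension $\ell/k'$ of degree at most $i_s(k')$ over which $\bar q$ has full Witt index $\lfloor \dim q /2 \rfloor$, and take $L/M'$ to be the unramified lift of $\ell/k'$, of the same degree.

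Finally, by Springer's theorem for the complete discretely valued field $M'$ with parameter $\tau$ (see e.g.\ \cite[Chapter VI]{Lm05}), the Witt index of $q \otimes L$ equals the sum of the Witt indices of its two residue forms with respect to $\tau$. Since $q \otimes L \cong q_1 \perp q_2$ has no $\tau$-part, this sum is just $i_W(\bar q \otimes \ell) = \lfloor \dim q / 2 \rfloor$, so $q$ is fully split over $L$. As $[L:M] \leq 2 i_s(k') \leq 2 i_s(k)$, taking the supremum over $M$ and $q$ gives $i_s(K) \leq 2 i_s(k)$.

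The main point requiring care is the Springer step: absorbing the uniformizer via $M(\sqrt{\pi})$ is essential, because otherwise one would need to split $\bar q_1$ and $\bar q_2$ individually via an unramified extension, yielding only Witt index $\lfloor \dim q_1 / 2 \rfloor + \lfloor \dim q_2 /2 \rfloor$, which falls short of $\lfloor \dim q /2 \rfloor$ when both $q_1$ and $q_2$ have odd dimension. A minor technicality is ensuring that $\ell/k'$ can be chosen separable so that the unramified lift exists, which is harmless here since at each step we are free to replace an inseparable splitting extension by a separable one of the same or smaller degree.
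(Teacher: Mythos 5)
Your proposal is correct and follows essentially the same route as the paper's proof: decompose $q \cong q_1 \perp \pi q_2$ into unit parts, absorb the uniformizer via the ramified quadratic extension $M(\sqrt{\pi})$, split the residue form over an extension of the residue field of degree at most $i_s(k)$, and lift unramified, concluding by Springer/Hensel. Your write-up is in fact somewhat more careful than the paper's, since you explicitly handle the finite extensions $M/K$ appearing in the supremum defining $i_s$ and explain why the $\tau$-part must be empty before invoking Springer's theorem.
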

\begin{proof}
Let $q / K$ be a quadratic form. Then $q = q_1 \perp tq_2$, where the entries of $q_1$ and $q_2$ are units in its ring of integers. Consider the ramified extension $L = K(\sqrt{t})$. The residue field of $L$ is also $k$. Then $q \otimes L \cong q_1 \perp q_2$. Let $m/k$ be a field extension of degree at most $i_s(k)$ splitting the reduction $\overline{q_1 \perp q_2}$. Let $M/K$ be a lift of the extension $m/k$. By Hensel's lemma, $(q_1 \perp q_2) \otimes L$ is split by $M/L$. Therefore, $i_s(K)$ is at most $2i_s(k)$. 
\end{proof}

\begin{lemma}
	\label{EASYLEMMA}
Let $\{ \phi_1, \phi_2, \cdots \phi_n  \}$ be a set of anisotropic Pfister forms over a field $F$ with $\ch(F) \neq 2$. Let $\phi_i^{\prime}$ denote their pure subforms. The forms $\phi_i$ share a common quadratic splitting field if and only if $\cap_{i=1}^{n}D_F(\phi_i^{\prime}) \neq \varnothing$.
\end{lemma}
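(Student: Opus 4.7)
The plan is to reduce the biconditional to the classical splitting criterion for Pfister forms: for an anisotropic Pfister form $\phi$ over $F$ with pure subform $\phi'$ and any $d \in F^{*}$, the quadratic extension $F(\sqrt{d})$ splits $\phi$ if and only if $-d \in D_F(\phi')$ (see for example \cite[Ch.~X]{Lm05}). This follows from two classical facts: a Pfister form becomes hyperbolic over $F(\sqrt{d})$ precisely when it is divisible by the $1$-fold Pfister form $\langle\langle d \rangle\rangle = \langle 1, -d \rangle$, and Pfister's pure subform theorem identifies divisibility by $\langle 1, c\rangle$ with the condition $c \in D_F(\phi')$.

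The second ingredient I will need is that anisotropy of $\phi$ forces $D_F(\phi') \cap (-F^{*2}) = \varnothing$. Indeed, writing $\phi = \langle 1 \rangle \perp \phi'$, if $\phi'(v) = -c^2$ for some nonzero $v$, then the nonzero vector $(c,v)$ satisfies $\phi(c,v) = c^2 + \phi'(v) = 0$, contradicting the hypothesis that $\phi$ is anisotropic. Hence for any $e \in D_F(\phi_i')$ the scalar $-e$ is a nonsquare in $F$, so $F(\sqrt{-e})/F$ is automatically a proper quadratic extension.

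Granting these two inputs, both directions of the claimed equivalence are formal. For the ``if'' direction of the contrapositive, assume $\bigcap_i D_F(\phi_i') \neq \varnothing$ and pick $e$ in the intersection; then $L := F(\sqrt{-e})$ is a proper quadratic extension by the observation above, and applying the splitting criterion with $d = -e$ separately to each $\phi_i$ shows that $L$ simultaneously splits every $\phi_i$. Conversely, if $L = F(\sqrt{d})$ (with $d \notin F^{*2}$) is a common quadratic splitting field of the family, the criterion gives $-d \in D_F(\phi_i')$ for each $i$, so $-d \in \bigcap_i D_F(\phi_i')$ and the intersection is nonempty.

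The only substantive point of care is keeping the sign convention consistent: the candidate extension $F(\sqrt{d})$ corresponds to the value $-d$ represented by the pure subform, and one must verify via anisotropy that an element pulled from the intersection yields a genuine, rather than trivial, quadratic extension. Neither of these is a real obstacle; once the classical splitting criterion is invoked with the correct sign, the lemma is immediate.
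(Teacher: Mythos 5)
Your proof is correct and follows essentially the same route as the paper: both reduce to the classical fact that $F(\sqrt{d})$ splits an anisotropic Pfister form $\phi$ iff $-d\in D_F(\phi')$ (via hyperbolicity over quadratic extensions plus Pfister's pure subform theorem). Your explicit observation that anisotropy forces $-e\notin F^{*2}$ for $e\in D_F(\phi')$ is a slightly cleaner way of handling the properness of the extension than the paper's case split on whether $c$ is a square.
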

\begin{proof}
Suppose all the Pfister forms are split by $F(\sqrt{c})$. In that case $\langle 1, -c \rangle$ is a subform of all the $\phi_i$. In which case $-c \in \cap_{i=1}^{n}D_F(\phi_i^{\prime})$. Conversely if $-c \in \cap_{i=1}^{n}D_F(\phi_i^{\prime})$, all $\phi_i$ have $\langle 1, -c \rangle$ as a subform. Thus all $\phi_{i}$ are split by $F(\sqrt{c})$.  
\end{proof}

\begin{cor}
\label{SPLITCOR} 
Let $\phi$ and $\gamma$ be $m$-fold and $n$-fold anisotropic Pfister forms respectively over a field $F$ with $\ch(F) \neq 2$.  Let $\phi^{\prime}$ and $\gamma^{\prime}$ denote their pure subforms. Then $\phi$ and $\gamma$ share a common quadratic splitting field if and only if $\gamma^{\prime} \perp \langle -1 \rangle \phi^{\prime}$ is isotropic.
\end{cor}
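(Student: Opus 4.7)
The plan is to reduce the corollary directly to the preceding lemma, which identifies the existence of a common quadratic splitting field with the non-emptiness of $D_F(\phi') \cap D_F(\gamma')$. Applied to the two Pfister forms $\phi$ and $\gamma$, this lemma says that a common quadratic splitting field exists if and only if there is a scalar $c \in F^{\times}$ which is represented simultaneously by the pure subforms $\phi'$ and $\gamma'$. Thus it suffices to prove that such a common value $c$ exists if and only if the orthogonal sum $\gamma' \perp \langle -1 \rangle \phi'$ is isotropic.

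For the forward direction, I would simply observe that if $c = \gamma'(v) = \phi'(w)$ for some nonzero vectors $v$, $w$, then the pair $(v, w)$ witnesses isotropy of $\gamma' \perp \langle -1\rangle \phi'$, since $\gamma'(v) + (-1)\phi'(w) = 0$. This is immediate.

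For the converse, suppose $(v, w)$ is an isotropic vector of $\gamma' \perp \langle -1\rangle \phi'$; then $\gamma'(v) = \phi'(w) =: c$. I would then argue that $c$ must be nonzero, which is the only non-trivial point in the whole argument. Since $\phi$ and $\gamma$ are anisotropic Pfister forms, their pure subforms $\phi'$ and $\gamma'$ are anisotropic as well (as subforms of anisotropic forms). Therefore, if either of $v$ or $w$ were nonzero with value $0$ under its corresponding form, we would contradict anisotropy; and if $c = 0$ while both $v$ and $w$ were zero, the pair would not witness isotropy in the first place. Hence $c \in F^{\times}$ is a common represented value, and the previous lemma yields the common quadratic splitting field.

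The only subtlety — and hence the main (mild) obstacle — is the handling of the degenerate case in the converse, ruling out that the isotropy of $\gamma' \perp \langle -1 \rangle \phi'$ is witnessed by a vector of the form $(v, 0)$ with $\gamma'(v) = 0$ or $(0, w)$ with $\phi'(w) = 0$; this is precisely where the anisotropy hypothesis on $\phi$ and $\gamma$ is used. No further ingredients beyond the preceding lemma and the standard fact that pure subforms of anisotropic Pfister forms remain anisotropic are needed.
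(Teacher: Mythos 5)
Your proposal is correct and follows the paper's own route exactly: reduce to the preceding lemma and observe that $\gamma' \perp \langle -1\rangle \phi'$ is isotropic precisely when $D_F(\gamma') \cap D_F(\phi') \neq \varnothing$, which the paper states in one line and you verify in detail (the anisotropy of the pure subforms ruling out the degenerate case). No differences in substance.
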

\begin{proof}
Note that $\gamma^{\prime} \perp \langle -1 \rangle \phi^{\prime}$ is anisotropic if and only if $D(\gamma^{\prime}) \cap D(\phi^{\prime}) = \varnothing$. Lemma \ref{EASYLEMMA} then shows the rest.
\end{proof}

\begin{prop}
\label{FUB}
Let $F$ be a field with $\ch(F) \neq 2$. Suppose that for every finite extension $L/F$, and every $\alpha \in H^2(L, \mathbb{Z}/2\mathbb{Z})$, one has $\ind(\alpha)| [\per(\alpha)]^2$. Further the $u$-invariant $u(L) \leq 8$ for every finite extension $L/F$. Then the splitting dimension $F$ is at most $8$.
\end{prop}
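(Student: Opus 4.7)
The plan is to show, for every finite extension $L/F$ and every quadratic form $q$ over $L$, the existence of an extension of $L$ of degree at most $8$ over which $q$ acquires the maximal Witt index $\lfloor \dim q / 2 \rfloor$. Since $u(L) \le 8$, we may assume $q$ is anisotropic with $\dim q \le 8$; absorbing a $1$-dimensional summand reduces the odd-dimensional case to the even case, so we focus on $\dim q \in \{2,4,6,8\}$.

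The strategy is to kill the cohomological invariants of $q$ in succession, using two consequences of the hypotheses: (a) every class in ${}_2\br(M)$ has index at most $4$ for every finite $M/F$ (the period divides $2$ and by assumption $\ind \mid \per^2$); and (b) $I^4 M = 0$ for every such $M$, because any nonzero form in $I^4 M$ would have anisotropic dimension at least $16$ by Arason--Pfister, contradicting $u(M) \le 8$. Observation (b), combined with the Milnor--Voevodsky isomorphism $I^3 M/I^4 M \cong H^3(M, \mathbb{Z}/2)$, implies that any anisotropic form in $I^3 M$ of dimension at most $8$ is similar to a $3$-fold Pfister form.

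Accordingly, I would construct a three-step tower $L \subset L_1 \subset L_2 \subset L_3$: take $L_1 := L(\sqrt{\operatorname{disc} q})$ of degree $\le 2$, which places $q_{L_1}$ in $I^2 L_1$; take $L_2/L_1$ of degree $\le 4$ splitting the Clifford invariant $c(q_{L_1}) \in {}_2\br(L_1)$ by (a), which places $q_{L_2}$ in $I^3 L_2$; and take $L_3/L_2$ of degree $\le 2$ splitting the resulting $3$-fold Pfister, if nonzero, by (b). Naively $[L_3 : L] \le 2 \cdot 4 \cdot 2 = 16$, which must be sharpened to $8$ by a case analysis on $\dim q$. For $\dim q \le 4$, the Clifford step leaves a $2$-fold Pfister split by degree $\le 2$ and the Arason step is unnecessary (total $\le 4$). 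For $\dim q = 6$, after the discriminant step $q_{L_1}$ has anisotropic dimension $\le 6$ in $I^2 L_1$, i.e.\ is an Albert form of some biquaternion of index $\le 4$; splitting that biquaternion by a degree-$4$ extension makes the form hyperbolic, with no Arason step required (total $\le 8$). For $\dim q = 8$ with trivial discriminant, Step~1 is vacuous and Steps~2--3 give total $\le 8$.

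The main obstacle, and the subtle case, is $\dim q = 8$ with nontrivial discriminant. Here I would exploit the universality of $q$ (every $9$-dimensional form is isotropic under $u(L) \le 8$, so $q$ represents every element of $L^{\times}$) to write $q = \langle d \rangle \perp q_0$ for a $7$-dimensional form $q_0$ of discriminant $-1$, where $d = \operatorname{disc} q$. This yields the Witt-decomposition $q = (q_0 \perp \langle 1 \rangle) + \langle d,-1 \rangle$ in $W(L)$; the second summand is killed by $L_1 = L(\sqrt{d})$, and the first summand lies in $I^2 L$. The task reduces to splitting the first summand over $L_1$ by a single degree-$4$ extension: the hard point is to show that the degree-$4$ Clifford splitting can be chosen so that the residual $3$-fold Pfister simultaneously vanishes, collapsing the would-be Clifford and Arason steps over $L_1$ into a single degree-$4$ extension, for a total of $[L_3 : L] \le 2 \cdot 4 = 8$. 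This simultaneous-splitting argument is the principal technical hurdle in going from the naive bound $16$ down to $8$.
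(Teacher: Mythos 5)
Your reduction correctly isolates the one genuinely hard case --- an $8$-dimensional form whose discriminant is nontrivial, i.e.\ after the discriminant step an anisotropic $8$-dimensional form in $I^2L_1$ whose Clifford invariant has index $4$ --- but you leave exactly that case open. You assert that the degree-$4$ extension splitting the Clifford invariant can be chosen so that the residual $3$-fold Pfister form also vanishes, and you flag this as the principal technical hurdle without supplying an argument. That is a genuine gap: nothing in your setup guarantees such a choice exists, and this is precisely where the paper's proof does something different from your tower $2\cdot 4\cdot 2$.

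The paper never uses a degree-$4$ extension. Since $\ind(e_2(q_{L_1}))\le 4$, Albert's theorem expresses the Clifford invariant as a sum of two quaternion symbols $\alpha_1+\alpha_2$. One splits only $\alpha_1$ by a quadratic extension $M/L_1$; then $q_M=\alpha_2+\beta$ in $W(M)$ with $\beta\in I^3M$, and $u(M)\le 8$ together with Arason--Pfister forces $\beta$ to be zero or similar to a $3$-fold Pfister form. The key point is that $\alpha_2$ (a $2$-fold Pfister form) and $\beta$ are then split by a \emph{common} quadratic extension $K/M$: by Corollary \ref{SPLITCOR} this holds as soon as $\beta'\perp\langle -1\rangle\alpha_2'$ is isotropic, and that form has dimension $3+7=10>8\ge u(M)$. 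The tower $L_1\subset M\subset K$ has degree $2\cdot 2\cdot 2=8$ over $F$, uniformly in $\dim q$, so no case analysis is needed. To repair your argument you would need exactly this common-value mechanism; factoring the Clifford splitting into two quadratic steps and merging the second with the Arason step is the clean way to package it, whereas trying to steer a single degree-$4$ splitting field of the biquaternion algebra to also kill the Arason invariant has no evident justification.
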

\begin{proof}
We will show that any form over any finite extension of $F$ is split by an extension of degree at most $8$. We may as well assume that the form is defined over $F$. Let $q/F$ be that form. We may assume it is even dimensional by replacing it by a codimension one subform if necessary. Let $L/F$ be a quadratic extension which splits its discriminant. Thus in the Witt ring $W(L)$, $q \otimes L$ lies in $I^2L$. Let $\alpha := e_2(q_L)$ be its $e_2$ invariant. By our assumption on the index of $\alpha$, it follows from Albert's theorem that $\alpha = \alpha_1 + \alpha_2$, where $\alpha_i$ are classes of quaternion algebras. We will abuse notation and not distinguish the norm forms of quaternion algebras (which are Pfister forms) and the algebras themselves. Let $M/L$ be a quadratic extension splitting $\alpha_1$. Then $q \otimes M - \alpha_2 \otimes M$ lies in $I^3M$. Thus $q \otimes M = \alpha_2 \otimes M + \beta$, where $\beta$ lies in $I^3M$. Because $u(M) \leq 8$, $\beta$ is similar to a three-fold Pfister form $\tilde{\beta}$ by the Arason Pfister Hauptsatz (see, for example \cite[Chapter X, Theorem 5.6]{Lm05}). But since $u(M) \leq 8$, the form given by the difference of the pure subforms $\alpha_2^{\prime} \otimes M \perp \langle -1 \rangle \tilde{\beta}^{\prime}$ being $10$-dimensional, is isotropic. By Corollary \ref{SPLITCOR}, there is a quadratic extension $K/M$ splitting $\alpha_2 \otimes M$ and $\beta$ simultaneously. Thus $M/F$ splits $q$ and its degree is $8$.  
\end{proof}

\begin{remark}
\begin{enumerate}
\item In view of results of Saltman (see \cite{S97}) on period-index bounds for $p$-adic curves and of Parimala and Suresh (see \cite{PS07}) on the $u$-invariant, the splitting dimension of function fields of curves over $p$-adic fields (for $p \neq 2$) is at most $8$. 

\item The splitting dimension of function fields of surfaces over finite fields (for example $\mathbb{F}_p(x, y))$ and that of fraction fields of complete two dimensional regular local rings with finite residue field (for example $\mathbb{F}_p((x, y))$) is also $8$. Note that such fields are $C_3$ fields. Thus the $u$-invariant of these fields is at most $8$. The fact that index divides the square of the period for surfaces over finite fields was proved by Lieblich (see \cite{L15}). 

\end{enumerate}
\end{remark}

\begin{prop}
\label{dDIM}
Let $F$ be a finitely generated transcendence degree $d$ field over an algebraically closed field $K$ of characteristic unequal to two. Then $i_s(F) \geq 2^{d}$. 
\end{prop}
\begin{proof}
Again, without loss of generality, we may work with $F$ in place of any finite extension of $F$, and show that $F$ admits a quadratic form that needs an extension of degree at least $2^{d}$ to be split.

We first show that there exists a Brauer class $\alpha$ over $F$ of index $2^{d-1}$ and a quadratic extension $L/F$ such that $\ind(\alpha \otimes L) = 2^{d-1}$. Let $p$ be the norm form of $L/F$ and $p_{\alpha}$ be a form with Clifford invariant $\alpha$. Then $q = p \perp p_{\alpha}$ has spitting index at least $2^{d}$.

We show the existence of a class $\alpha$ and field extension $L/F$ by induction on the transcendence degree: The base case $d=1$, follows from the fact that all Brauer classes on function fields of curves over algebraically closed fields are trivial. 

Let $M$ be a transcendence degree $k$ field over $K$. There exists a discrete valuation $v$ on $M$ which is trivial on $K$. Let $\pi_{v} \in M$ be a parameter for this valuation, and $m(v)$ be the residue field. By induction hypothesis, there exists $\alpha_0$ in ${}_{2}\br(m(v))$ of index $2^{k-2}$ and an extension $L := m(v)(\sqrt{u})$ such that $\ind(\alpha_0 \otimes L) = 2^{k-2}$. Consider the class $\alpha = \alpha_0 + (u, \pi_{v})$ in $\br(M_{v})$. One may assume that this class is defined over $M$. Viewing it as a class over $M$, observe that $\ind(\alpha) \leq 2^{k-1}$. Note that by \cite{GS06}[Chapter 6, Exercise 6], $\ind(\alpha \otimes M_{v}) = 2 \ind(\alpha_0 \otimes M_{v}(\sqrt{u})) = 2\ind(\alpha_0 \otimes L) = 2^{k-1}$. This shows that $\ind(\alpha)$ must be $2^{k-1}$. Further, note also that the extension $M(\sqrt{\pi_{v} +1})/M$ splits in $M_{v}$. This is because $\pi_{v} + 1$ is a square in $M_{v}$ by Hensel's Lemma. Thus $\ind(\alpha \otimes M(\sqrt{\pi_{v} + 1})) = 2^{k-1}$. 
\end{proof}
\begin{remark}
As an explicit example, the quadratic form $q =  \langle x, y, xy, -(y+1), -z, z(y+1)\rangle \perp \langle 1, -(z+1) \rangle$ over $\mathbb{C}(x, y, z)$ needs an extension of degree $8$ to be split.
\end{remark}

\comm{
\begin{example}
We give an example of a quadratic form over $\mathbb{C}(x, y, z)$ with splitting index at least $8$.
Consider the division algebra $D = (x, y)\otimes (y+1, z)$. This is a division algebra because the corresponding Albert form $ q = \langle x, y, xy, -(y+1), -z, z(y+1) \rangle$. This form is anisotropic over $\mathbb{C}(x, y)((z))$. Note that $q = \langle x, y, xy, -(y+1) \rangle \perp z \langle 1, -(y+1) \rangle$. Both the forms $\langle x, y, xy, -(y+1) \rangle$ and $\langle 1, -(y+1) \rangle$ are anisotropic over the residue field over the residue field $\mathbb{C}(x, y)$ and therefore by Springer, $q$ is anisotropic over $\mathbb{C}(x, y)((z))$ and also over $\mathbb{C}(x, y, z)$. Note that $z+1$ is a square in $\mathbb{C}(x, y)((z))$ by Hensel's Lemma. Therefore $\mathbb{C}(x, y, z)(\sqrt{z+1})$ is split in $\mathbb{C}(x, y)((z))$. Thus $D$ is a division algebra over $\mathbb{C}(x, y)((z))$ as well as $\mathbb{C}(x, y, z)(\sqrt{z+1})$. Therefore $p = \langle x, y, xy, -(y+1), -z, z(y+1)\rangle \perp \langle 1, -(z+1) \rangle$ is our required quadratic form.  
\end{example}

}
 
\begin{prop}
	\label{CDV}
Let $F$ be the function field of a curve over a complete discretely valued field with residue characteristic unequal to two. Suppose that over every finite extension $L/F$, there exists a class $\alpha \in {}_{2}\br(L)$ such that one has $\ind(\alpha) = 2^{n}$. Then $i_s(F) \geq 2^{n+1}$.
\end{prop}
\begin{proof}
Again, without loss of generality, we work with $F$ in place of a finite extension of $F$. Let $\alpha$ be in ${}_{2}\br(F)$ such that $\ind(\alpha) = 2^n$. By the Merkurjev Suslin Theorem, there is a quadratic form $p_{\alpha}$ of trivial discriminant such that $e_2(p_{\alpha}) = \alpha$. By \cite[Theorem 2.6]{RS13}, there exists a non-trivial discrete valuation $v$ of $F$ such that $\ind(\alpha \otimes F_{v}) = 2^n$. Let $L/F$ be  a quadratic extension which is split over $v$. Let $p$ be the norm form of $L/F$. Then the form $q = p \perp p_{\alpha}$ needs an extension of degree at least $2^{n}$ to be split. Any extension $M/F$ splitting $q$ must contain $L/F$. Since $L$ is split at $v$, one has $\ind(\alpha \otimes L_v) = 2^{n}$, and hence $\ind(\alpha) = 2^{n}$. As a result $[M : L]$ must be divisible by $2^{n}$. This shows that $i_s(F) \geq 2^{n + 1}$.
\end{proof}

\section{Acknowledgements}
This work will be a part of my doctoral thesis. I would like to express my heartfelt gratitude to my advisor Prof. Daniel Krashen for suggesting this problem to me and also thank him for his time, patience and constant encouragement along the way. His feedback on the writing has been very valuable and instructive. I also thank Prof.~David Harbater for numerous comments and suggestions.

\end{document}